\newcommand{\Z}{{\textsf{\textup{Z}}}}
\newtheorem{thm}{Theorem}
\newtheorem{cor}[thm]{Corollary}
\newtheorem{defi}[thm]{Definition}
\newtheorem{rem}[thm]{Remark}
\newtheorem{nota}[thm]{Notation}
\newtheorem{princ}[thm]{Principle}
\newtheorem{ack}[thm]{Acknowledgement}
\newtheorem*{tempo*}{Template}
\newcommand\be{\begin{equation}}
\newcommand\ee{\end{equation}} 
\def\bdefi{\begin{defi}\rm}
\def\edefi{\end{defi}}
\def\bnota{\begin{nota}\rm}
\def\enota{\end{nota}}
\def\FIVE{\Pi_{1}^{1}\text{-\textup{\textsf{CA}}}_{0}}
\def\SIX{\Pi_{2}^{1}\text{-\textsf{\textup{CA}}}_{0}}
\def\SIXK{\Pi_{k}^{1}\text{-\textsf{\textup{CA}}}_{0}^{\omega}}
\def\ZF{\textup{\textsf{ZF}}}
\def\L{\textsf{\textup{L}}}
 \def\r{\mathbb{r}}
\def\RCA{\textup{\textsf{RCA}}}
\def\({\textup{(}}
\def\){\textup{)}}
\def\RCAo{\textup{\textsf{RCA}}_{0}^{\omega}}
\def\ACAo{\textup{\textsf{ACA}}_{0}^{\omega}}
\def\bye{\end{document}}
\def\N{{\mathbb  N}}
\def\Q{{\mathbb  Q}}
\def\R{{\mathbb  R}}
\def\SS{\textup{\textsf{S}}}
\def\di{\rightarrow}
\def\asa{\leftrightarrow}
\def\ACA{\textup{\textsf{ACA}}}
\def\QFAC{\textup{\textsf{QF-AC}}}
\def\STS{\textup{\textsf{STS}}}
\def\cocode{\textup{\textsf{cocode}}}
\def\SUM{\textup{\textsf{SUM}}}
\def\BAIRE{\textup{\textsf{Baire}}}
\def\NIN{\textup{\textsf{NIN}}}
\def\NBI{\textup{\textsf{NBI}}}
\def\BOOT{\textup{\textsf{BOOT}}}
\def\IND{\textup{\textsf{IND}}}
\def\NFP{\textup{\textsf{NFP}}}
\def\fin{\textup{\textsf{fin}}}
\def\eps{\varepsilon}
\def\ECF{\textup{\textsf{ECF}}}
\numberwithin{equation}{section}
\numberwithin{thm}{section}
\begin{document}
\title[Reverse Mathematics of the uncountability of $\R$]{Reverse Mathematics of the uncountability of $\R$: \\
{\tiny Baire classes, metric spaces, and unordered sums}}
\author{Sam Sanders}
\address{Department of Mathematics, TU Darmstadt, Darmstadt, Germany}
\email{sasander@me.com, sanders@mathematik.tu-darmstadt.de}
\subjclass[2010]{03B30, 03D65, 03F35}
\keywords{reverse mathematics, uncountability of $\R$, higher-order arithmetic, Baire classes, unordered sums, metric spaces}
\begin{abstract}
Dag Normann and the author have recently initiated the study of the logical and computational properties of \emph{the uncountability of $\R$} formalised as the statement $\NIN$ (resp.\ $\NBI$) 
that \emph{there is no injection \(resp.\ bijection\) from $[0,1]$ to $\N$}.  On one hand, these principles are \emph{hard to prove} relative to the usual scale based on comprehension and discontinuous functionals. 
On the other hand, these principles are among the weakest principles on a new \emph{complimentary} scale based on (classically valid) continuity axioms from Brouwer's intuitionistic mathematics. 
We continue the study of $\NIN$ and $\NBI$ relative to the latter scale, connecting these principles with theorems about \emph{Baire classes}, \emph{metric spaces}, and \emph{unordered sums}.  The importance of the first two topics requires no explanation, while the final topic's main theorem, i.e.\ that \emph{when they exist, unordered sums are \(countable) series}, has the rather unique property 
of implying $\NIN$ formulated with the Cauchy criterion, and (only) $\NBI$ when formulated with limits. 
This study is undertaken within Ulrich Kohlenbach's framework of \emph{higher-order Reverse Mathematics}.
\end{abstract}
%

\maketitle
\thispagestyle{empty}

\section{Introduction}\label{intro}
The uncountability of $\R$ deals with arbitrary mappings with domain $\R$, and is therefore best studied in a language that has such objects as first-class citizens. 
Obviousness, much more than beauty, is however in the eye of the beholder.  Lest we be misunderstood, we formulate a blanket caveat: all notions (computation, continuity, function, open set, et cetera) used in this paper are to be interpreted via their higher-order definitions, also listed below, \emph{unless explicitly stated otherwise}. 
\subsection{Aim and motivation}\label{sintro}
In a nutshell, we study the \emph{Reverse Mathematics} of the \emph{uncountability of $\R$}, in particular the connection between the latter and \emph{Baire classes}, \emph{unordered sums}, and \emph{metric spaces}.
We now explain the first two italicised concepts, as follows.  

\smallskip

First of all, the uncountability of $\R$ is perhaps the most basic property of the real numbers, to be found in many mainstream textbooks.  
The uncountability of $\R$ can be expressed in at least three different ways:
\begin{enumerate}
 \renewcommand{\theenumi}{\alph{enumi}}
\item \emph{Cantor's theorem}: there is no surjection from $\N$ to $\R$.\label{dak}
\item There is no injection from $\R$ to $\N$ (Kunen, \cite{kunen})\label{ku}
\item There is no bijjection from $\R$ to $\N$ (Hbracek-Jech, \cite{hrbacekjech})\label{hj}
\end{enumerate}
Cantor established the uncountability of $\R$ in 1874 in his \emph{first} set theory paper \cite{cantor1} based on item \eqref{dak} formulated as in the following theorem. 
\begin{thm}[Cantor's theorem]\label{lanto}
\emph{For a sequence of real numbers, there is a real number not in that sequence.}
\end{thm}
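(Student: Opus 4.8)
The plan is to reproduce Cantor's 1874 nested-interval argument, which is robust enough to go through in the base theory $\RCAo$ with no comprehension beyond the decidable case. Fix a sequence $(x_n)_{n\in\N}$ of reals; I want to exhibit a single real $x$ with $x\neq x_n$ for every $n$. The idea is to build a nested sequence of \emph{rational} closed intervals $I_n=[a_n,b_n]$ with $a_n,b_n\in\Q$, $I_{n+1}\subseteq I_n$, length $b_n-a_n=3^{-n}$, and crucially $x_n\notin I_{n+1}$; the limit of the $a_n$ then lands inside every $I_n$ and so escapes the whole sequence.

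Starting from $I_0=[0,1]$, at stage $n$ I trisect $I_n$ into its left, middle, and right closed thirds. Since $x_n$ is a single point it meets at most one of the two disjoint outer thirds, so at least one outer third avoids $x_n$ and I take it as $I_{n+1}$. The only delicate point is to make this choice \emph{effectively}, since comparing the real $x_n$ with the trisection points is not decidable. I would resolve this by the standard approximation trick: using that $x_n$ is given as a fast Cauchy sequence, compute a rational $q_n$ with $|x_n-q_n|<3^{-n}/6$ and compare the rational $q_n$ with the rational midpoint $m_n=(a_n+b_n)/2$. If $q_n\leq m_n$ then $x_n< a_n+\tfrac{2}{3}3^{-n}$, so $x_n$ lies outside the right third and I choose it; otherwise $x_n> a_n+\tfrac{1}{3}3^{-n}$, so $x_n$ lies outside the left third and I choose that. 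This decision uses only a comparison of rationals and is therefore available in $\RCAo$, and in either case it guarantees $x_n\notin I_{n+1}$.

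Finally, the sequence $(a_n)$ is nondecreasing, bounded, and Cauchy with an explicit modulus (the intervals nest and shrink geometrically), so it defines a real $x=\lim_n a_n$ in the base theory, and $x\in I_m$ for every $m$ because each $I_m$ is closed and contains all $a_k$ with $k\geq m$. For any fixed $n$ we then have $x\in I_{n+1}$ while $x_n\notin I_{n+1}$, whence $x\neq x_n$, as required. The one genuine obstacle is exactly the effectivity of the trisection step: a naive case split on $x_n< m_n$ versus $x_n\geq m_n$ is not permitted in a weak system, and it is the rational witness $q_n$ together with the geometric safety margin $3^{-n}/6$ that repairs this. Everything else is bookkeeping with rational intervals and a routine convergence estimate.
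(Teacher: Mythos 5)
Your proof is correct, and it is essentially the argument the paper relies on: the paper gives no proof of its own but defers to \cite{simpson2}*{II.4.9}, which is exactly this effective nested-interval construction, with the same device of deciding the trisection step by comparing a rational approximation of $x_n$ (with a safety margin) against a rational cut point. The margin $3^{-n}/6$ does guarantee $x_n$ lies strictly outside the chosen outer third, and the limit of the $a_n$ exists in the base theory with the explicit modulus you give, so the argument goes through in $\RCAo$ (and in $\RCA_0$) as claimed.
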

Secondly, the program \emph{Reverse Mathematics} seeks to identify the minimal axioms needed to prove theorems of ordinary\footnote{Simpson describes \emph{ordinary mathematics} in \cite{simpson2}*{I.1} as \emph{that body of mathematics that is prior to or independent of the introduction of abstract set theoretic concepts}.} mathematics.  We provide an introduction to Reverse Mathematics (RM hereafter) in Section \ref{prelim1}.
The uncountability of $\R$ is studied in (second-order) RM in \cite{simpson2}*{II.4.9} in the form of Theorem~\ref{lanto}.  

\smallskip

Thirdly, since Cantor's theorem as in item \eqref{dak} and Theorem \ref{lanto} is studied in RM, it is a natural question what the status is of the remaining items \eqref{ku} and \eqref{hj}, working in Kohlenbach's \emph{higher-order} RM, where the latter is introduced in Section~\ref{prelim1}.
Hence, Dag Normann and the author initiated the study the uncountability of $\R$ in \cite{dagsamX} in the guise of the following natural principles:
\begin{itemize}
\item $\NIN$: \emph{there is no injection from $[0,1]$ to $\N$},
\item  $\NBI$: \emph{there is no bijection from $[0,1]$ to $\N$}.
\end{itemize}
Now Cantor's theorem as in item \eqref{dak} is provable in the base theory of RM, and therefore classified as `weak'.  
Intuitively speaking, $\NIN$ and $\NBI$ are also weak principles, yet we need rather strong comprehension axioms to prove them, namely at the level of second-order arithmetic, by \cite{dagsamX}*{Theorem 3.1}.  
Many theorems with this property (intuitively weak, but requiring strong comprehension axioms for a proof) have been identified in \cites{dagsamIII, dagsamV, dagsamVI, dagsamVII, dagsamX}, suggesting the need for an 
alternative scale that classifies `intuitively weak' theorems as `formally weak'.  

\smallskip

%

Finally, we have developed such an alternative scale in \cites{dagsamX, samNEO, samph}, based on \emph{classically valid} continuity axioms from Brouwer's intuitionistic mathematics, namely the \emph{neighbourhood function principle} $\NFP$ from \cite{troeleke1}.  As is clear from \cite{dagsamX}*{Figure~1}, $\NIN$ and $\NBI$ are among the weakest principles on this alternative scale.  
In this paper, we classify the following theorems relative to this scale by connecting them to $\NIN$ and $\NBI$; the topics are \emph{unordered sums}, \emph{metric spaces}, and \emph{Baire classes}.  
\begin{itemize}
\item There is a function not in Baire class $2$.
\item The characterisation theorem for Baire class $1$.
\item Locally, sequential continuity implies continuity in metric spaces. 
\item A metric space is separable if it is countably compact. 
\item An unordered sum, if it exists, equals a sub-series.  
\end{itemize}
We also connect these items to basic theorems about countable sets, as such theorems can be `explosive' in that they become much stronger when combined with discontinuous comprehension functionals. 
The aforementioned `alternative scale' based on $\NFP$ is discussed in detail in Section \ref{kelim}, up next.


\subsection{An alternative scale following Brouwer}\label{kelim}
We discuss the alternative scale based on $\NFP$ mentioned in the previous section.  
The systems $\Z_{2}^{\omega}$ and $\Z_{2}^{\Omega}$ from Section \ref{HCT} play a central role.  
For now, it suffices to know that both these systems are conservative extensions of second-order arithmetic $\Z_{2}$, while
$\Z_{2}^{\omega}$ (resp.\ $\Z_{2}^{\Omega}$) is based on third-order (resp.\ fourth-order) comprehension functionals.  

\smallskip

Now, the following theorems formulated in the language of third-order arithmetic, are not provable in $\Z_{2}^{\omega}$, but provable in $\Z_{2}^{\Omega}$, by the results in \cites{dagsamIII, dagsamV, dagsamVI, dagsamX}.  
\begin{itemize}
\item Arzel\`a's convergence theorem for the Riemann integral (1885, \cites{arse2}).
\item Pincherle's local-global theorem (1882, \cite{tepelpinch}).
\item The uncountability of $\R$ as in $\NIN$ or $\NBI$ (1874, Cantor, \cite{cantor1}).
\item Covering theorems (Lindel\"of, Heine-Borel, Vitali, Besicovitch, \dots) for uncountable coverings of the unit interval. 
\item Basic properties of the gauge and Lebesgue integral (\cite{zwette}) without the coding from in second-order RM (see \cite{simpson2}*{X.1}).
\item Basic theorems (Urysohn, Tietze, \dots) concerning open sets given as characteristic functions (\cite{dagsamVII}).
\item Covering theorems (Heine-Borel, Vitali) for coverings of the unit interval formulated with \textbf{countable collections}, as done by Borel in \cite{opborrelen2}.
\item Convergence theorems for nets in the unit interval indexed by Baire space. 
\item Basic theorems on countable sets (defined via injections or bijections to $\N$ as in Definition \ref{standard}), like that \emph{a countable set has measure zero}.  
\end{itemize}
This list may be greatly extended by consulting \cites{dagsamIII, dagsamV, dagsamVI, dagsamX}.  The point of this list is to exhibit a large number of \emph{intuitively weak} theorems of ordinary mathematics that are formally classified as \emph{hard to prove}; indeed, $\Z_{2}^{\omega}$ does not suffice for a proof of any of the above items, while $\Z_{2}^{\Omega}$ does, and these two systems are both conservative extensions of $\Z_{2}$.  Hence, we observe a conceptual problem with the classification of third-order principles based on third-order comprehension functionals as in $\Z_{2}^{\omega}$. 

\smallskip

The cause of the above problem is that we are mixing two fundamentally different categories.
Indeed, a fundamental division here is between \emph{normal} and \emph{non-normal} objects and principles, where the former give rise to \emph{discontinuous objects} and the latter do not (see Definition \ref{norma} for the exact formulation).  
For reference, $\NIN$ and $\NBI$ are \emph{non-normal} as they do not imply the existence of a discontinuous function on $\R$.
In this paper, all principles we study are part of third-order arithmetic, i.e.\ `non-normal vs normal' refers to the existence of a discontinuous function on $\R$.

\smallskip

Now, the `normal vs non-normal' distinction yield two (fairly independent) scales for classifying logical and computational strength: the standard one is the `normal' scale based on comprehension and \emph{discontinuous} objects, like the G\"odel hierarchy (\cite{sigohi}) and \emph{higher-order} Reverse Mathematics (Section \ref{prelim1}).  The `non-normal' scale is a hierarchy based on the \emph{neighbourhood function principle} $\NFP$ from \cite{troeleke1}*{p.\ 215}, a classically valid continuity axiom of Brouwer's intuitionistic mathematics. 
\begin{princ}[$\NFP$]
For any formula $A$, 
\be\label{gru}
(\forall f \in \N^{\N})(\exists n \in \N)A(\overline{f}n)\di  (\exists g\in K_{0})(\forall f\in \N^{\N})A(\overline{f}g(f)),
\ee   
where `$g\in K_{0}$' means that $g$ is an RM-code and $\overline{f}n$ is $\langle f(0), f(1), \dots, f(n-1) \rangle$.
\end{princ}
A classification of convergence theorems for nets and uncountable covering theorems in terms of $\NFP$ can be found in \cite{samph}, while the connection to $\NIN$ and $\NBI$ is explored in \cite{dagsamX, samNEO}.
As is clear from \cite{dagsamX}*{Figure 1}, $\NIN$ and $\NBI$ are among the weakest principles on the non-normal scale. 

\smallskip

Finally, applying the canonical embedding of higher-order arithmetic to second-order arithmetic, called $\ECF$ in \cite{kohlenbach2}, many of the results in \cite{samph} yield known results regarding the Big Five of RM.  Hence, second-order RM is a reflection of a higher truth under a lossy translation (namely $\ECF$), following Plato's \emph{allegory of the cave}.  We discuss $\ECF$ in more detail in Remark \ref{ECF}.

\section{Preliminaries}
We introduce \emph{Reverse Mathematics} in Section \ref{prelim1}, as well as Kohlebach's generalisation to \emph{higher-order arithmetic}, and the associated base theory $\RCAo$.  
We introduce some notations in Section \ref{kota} and higher-order axioms in Section~\ref{HCT}.  

\subsection{Reverse Mathematics}\label{prelim1}
Reverse Mathematics is a program in the foundations of mathematics initiated around 1975 by Friedman (\cites{fried,fried2}) and developed extensively by Simpson (\cite{simpson2}).  
The aim of RM is to identify the minimal axioms needed to prove theorems of ordinary, i.e.\ non-set theoretical, mathematics. 

\smallskip

We refer to \cite{stillebron} for a basic introduction to RM and to \cite{simpson2, simpson1} for an overview of RM.  We expect familiarity with RM, but do sketch some aspects of Kohlenbach's \emph{higher-order} RM (\cite{kohlenbach2}) essential to this paper, including the base theory $\RCAo$ (Definition \ref{kase}).  
As will become clear, the latter is officially a type theory but can accommodate (enough) set theory via e.g.\ Definition \ref{keepintireal}.\eqref{koer} and Definition \ref{standard}. 

\smallskip

First of all, in contrast to `classical' RM based on \emph{second-order arithmetic} $\Z_{2}$, higher-order RM uses $\L_{\omega}$, the richer language of \emph{higher-order arithmetic}.  
Indeed, while the former is restricted to natural numbers and sets of natural numbers, higher-order arithmetic can accommodate sets of sets of natural numbers, sets of sets of sets of natural numbers, et cetera.  
To formalise this idea, we introduce the collection of \emph{all finite types} $\mathbf{T}$, defined by the two clauses:
\begin{center}
(i) $0\in \mathbf{T}$   and   (ii)  If $\sigma, \tau\in \mathbf{T}$ then $( \sigma \di \tau) \in \mathbf{T}$,
\end{center}
where $0$ is the type of natural numbers, and $\sigma\di \tau$ is the type of mappings from objects of type $\sigma$ to objects of type $\tau$.
In this way, $1\equiv 0\di 0$ is the type of functions from numbers to numbers, and  $n+1\equiv n\di 0$.  Viewing sets as given by characteristic functions, we note that $\Z_{2}$ only includes objects of type $0$ and $1$.    

\smallskip

Secondly, the language $\L_{\omega}$ includes variables $x^{\rho}, y^{\rho}, z^{\rho},\dots$ of any finite type $\rho\in \mathbf{T}$.  Types may be omitted when they can be inferred from context.  
The constants of $\L_{\omega}$ include the type $0$ objects $0, 1$ and $ <_{0}, +_{0}, \times_{0},=_{0}$  which are intended to have their usual meaning as operations on $\N$.
Equality at higher types is defined in terms of `$=_{0}$' as follows: for any objects $x^{\tau}, y^{\tau}$, we have
\be\label{aparth}
[x=_{\tau}y] \equiv (\forall z_{1}^{\tau_{1}}\dots z_{k}^{\tau_{k}})[xz_{1}\dots z_{k}=_{0}yz_{1}\dots z_{k}],
\ee
if the type $\tau$ is composed as $\tau\equiv(\tau_{1}\di \dots\di \tau_{k}\di 0)$.  
Furthermore, $\L_{\omega}$ also includes the \emph{recursor constant} $\mathbf{R}_{\sigma}$ for any $\sigma\in \mathbf{T}$, which allows for iteration on type $\sigma$-objects as in the special case \eqref{special}.  Formulas and terms are defined as usual.  
One obtains the sub-language $\L_{n+2}$ by restricting the above type formation rule to produce only type $n+1$ objects (and related types of similar complexity).        
\bdefi\label{kase} 
The base theory $\RCAo$ consists of the following axioms.
\begin{enumerate}
 \renewcommand{\theenumi}{\alph{enumi}}
\item  Basic axioms expressing that $0, 1, <_{0}, +_{0}, \times_{0}$ form an ordered semi-ring with equality $=_{0}$.
\item Basic axioms defining the well-known $\Pi$ and $\Sigma$ combinators (aka $K$ and $S$ in \cite{avi2}), which allow for the definition of \emph{$\lambda$-abstraction}. 
\item The defining axiom of the recursor constant $\mathbf{R}_{0}$: for $m^{0}$ and $f^{1}$: 
\be\label{special}
\mathbf{R}_{0}(f, m, 0):= m \textup{ and } \mathbf{R}_{0}(f, m, n+1):= f(n, \mathbf{R}_{0}(f, m, n)).
\ee
\item The \emph{axiom of extensionality}: for all $\rho, \tau\in \mathbf{T}$, we have:
\be\label{EXT}\tag{$\textsf{\textup{E}}_{\rho, \tau}$}  
(\forall  x^{\rho},y^{\rho}, \varphi^{\rho\di \tau}) \big[x=_{\rho} y \di \varphi(x)=_{\tau}\varphi(y)   \big].
\ee 
\item The induction axiom for quantifier-free formulas of $\L_{\omega}$.
\item $\QFAC^{1,0}$: the quantifier-free Axiom of Choice as in Definition \ref{QFAC}.
\end{enumerate}
\edefi
\noindent
Note that variables (of any finite type) are allowed in quantifier-free formulas of the language $\L_{\omega}$: only quantifiers are banned.
\bdefi\label{QFAC} The axiom $\QFAC$ consists of the following for all $\sigma, \tau \in \textbf{T}$:
\be\tag{$\QFAC^{\sigma,\tau}$}
(\forall x^{\sigma})(\exists y^{\tau})A(x, y)\di (\exists Y^{\sigma\di \tau})(\forall x^{\sigma})A(x, Y(x)),
\ee
for any quantifier-free formula $A$ in the language of $\L_{\omega}$.
\edefi
As discussed in \cite{kohlenbach2}*{\S2}, $\RCAo$ and $\RCA_{0}$ prove the same sentences `up to language' as the latter is set-based and the former function-based.  Recursion as in \eqref{special} is called \emph{primitive recursion}; the class of functionals obtained from $\mathbf{R}_{\rho}$ for all $\rho \in \mathbf{T}$ is called \emph{G\"odel's system $T$} of all (higher-order) primitive recursive functionals.  

\subsection{Notations and the like}\label{kota}
We introduce some relevant notations and basic definitions related to higher-order RM. 

\smallskip

First of all, we use the usual notations for natural, rational, and real numbers, and the associated functions, as introduced in \cite{kohlenbach2}*{p.\ 288-289}.  
\begin{defi}[Real numbers and related notions in $\RCAo$]\label{keepintireal}\rm~
\begin{enumerate}
 \renewcommand{\theenumi}{\alph{enumi}}
\item Natural numbers correspond to type zero objects, and we use `$n^{0}$' and `$n\in \N$' interchangeably.  Rational numbers are defined as signed quotients of natural numbers, and `$q\in \Q$' and `$<_{\Q}$' have their usual meaning.    
\item Real numbers are coded by fast-converging Cauchy sequences $q_{(\cdot)}:\N\di \Q$, i.e.\  such that $(\forall n^{0}, i^{0})(|q_{n}-q_{n+i}|<_{\Q} \frac{1}{2^{n}})$.  
We use Kohlenbach's `hat function' from \cite{kohlenbach2}*{p.\ 289} to guarantee that every $q^{1}$ defines a real number.  
\item We write `$x\in \R$' to express that $x^{1}:=(q^{1}_{(\cdot)})$ represents a real as in the previous item and write $[x](k):=q_{k}$ for the $k$-th approximation of $x$.    
\item Two reals $x, y$ represented by $q_{(\cdot)}$ and $r_{(\cdot)}$ are \emph{equal}, denoted $x=_{\R}y$, if $(\forall n^{0})(|q_{n}-r_{n}|\leq {2^{-n+1}})$. Inequality `$<_{\R}$' is defined similarly.  
We sometimes omit the subscript `$\R$' if it is clear from context.           
\item Functions $F:\R\di \R$ are represented by $\Phi^{1\di 1}$ mapping equal reals to equal reals, i.e.\ extensionality as in $(\forall x , y\in \R)(x=_{\R}y\di \Phi(x)=_{\R}\Phi(y))$.\label{EXTEN}
\item The relation `$x\leq_{\tau}y$' is defined as in \eqref{aparth} but with `$\leq_{0}$' instead of `$=_{0}$'.  Binary sequences are denoted `$f^{1}, g^{1}\leq_{1}1$', but also `$f,g\in C$' or `$f, g\in 2^{\N}$'.  Elements of Baire space are given by $f^{1}, g^{1}$, but also denoted `$f, g\in \N^{\N}$'.
\item For a binary sequence $f^{1}$, the associated real in $[0,1]$ is $\r(f):=\sum_{n=0}^{\infty}\frac{f(n)}{2^{n+1}}$.\label{detrippe}
\item Sets of type $\rho$ objects $X^{\rho\di 0}, Y^{\rho\di 0}, \dots$ are given by their characteristic functions $F^{\rho\di 0}_{X}\leq_{\rho\di 0}1$, i.e.\ we write `$x\in X$' for $ F_{X}(x)=_{0}1$. \label{koer} 
\end{enumerate}
\end{defi}
\noindent
We note that sets as in item \eqref{koer} from Definition \ref{keepintireal} are also used in e.g.\ \cite{kruisje, dagsamX}. 

\smallskip

Secondly, we mention the highly useful $\ECF$-interpretation. 
\begin{rem}[The $\ECF$-interpretation]\label{ECF}\rm
The (rather) technical definition of $\ECF$ may be found in \cite{troelstra1}*{p.\ 138, \S2.6}.
Intuitively, the $\ECF$-interpretation $[A]_{\ECF}$ of a formula $A\in \L_{\omega}$ is just $A$ with all variables 
of type two and higher replaced by type one variables ranging over so-called `associates' or `RM-codes' (see \cite{kohlenbach4}*{\S4}); the latter are (countable) representations of continuous functionals.  
The $\ECF$-interpretation connects $\RCAo$ and $\RCA_{0}$ (see \cite{kohlenbach2}*{Prop.\ 3.1}) in that if $\RCAo$ proves $A$, then $\RCA_{0}$ proves $[A]_{\ECF}$, again `up to language', as $\RCA_{0}$ is 
formulated using sets, and $[A]_{\ECF}$ is formulated using types, i.e.\ using type zero and one objects.  
\end{rem}
In light of the widespread use of codes in RM and the common practise of identifying codes with the objects being coded, it is no exaggeration to refer to $\ECF$ as the \emph{canonical} embedding of higher-order into second-order arithmetic. 

\smallskip

Finally, for completeness, we list a notational convention for finite sequences.  
\begin{nota}[Finite sequences]\label{skim}\rm
We assume a dedicated type for `finite sequences of objects of type $\rho$', namely $\rho^{*}$, which we shall only use for $\rho=0,1$.  Since the usual coding of pairs of numbers goes through in $\RCAo$, we shall not always distinguish between $0$ and $0^{*}$. 
Similarly, we do not always distinguish between `$s^{\rho}$' and `$\langle s^{\rho}\rangle$', where the former is `the object $s$ of type $\rho$', and the latter is `the sequence of type $\rho^{*}$ with only element $s^{\rho}$'.  The empty sequence for the type $\rho^{*}$ is denoted by `$\langle \rangle_{\rho}$', usually with the typing omitted.  

\smallskip

Furthermore, we denote by `$|s|=n$' the length of the finite sequence $s^{\rho^{*}}=\langle s_{0}^{\rho},s_{1}^{\rho},\dots,s_{n-1}^{\rho}\rangle$, where $|\langle\rangle|=0$, i.e.\ the empty sequence has length zero.
For sequences $s^{\rho^{*}}, t^{\rho^{*}}$, we denote by `$s*t$' the concatenation of $s$ and $t$, i.e.\ $(s*t)(i)=s(i)$ for $i<|s|$ and $(s*t)(j)=t(|s|-j)$ for $|s|\leq j< |s|+|t|$. For a sequence $s^{\rho^{*}}$, we define $\overline{s}N:=\langle s(0), s(1), \dots,  s(N-1)\rangle $ for $N^{0}<|s|$.  
For a sequence $\alpha^{0\di \rho}$, we also write $\overline{\alpha}N=\langle \alpha(0), \alpha(1),\dots, \alpha(N-1)\rangle$ for \emph{any} $N^{0}$.  By way of shorthand, 
$(\forall q^{\rho}\in Q^{\rho^{*}})A(q)$ abbreviates $(\forall i^{0}<|Q|)A(Q(i))$, which is (equivalent to) quantifier-free if $A$ is.   
\end{nota}

\subsection{Some axioms of higher-order Reverse Mathematics}\label{HCT}
We introduce some axioms of higher-order RM which will be used below.
In particular, we introduce some functionals which constitute the counterparts of second-order arithmetic $\Z_{2}$, and some of the Big Five systems, in higher-order RM.
We use the `standard' formulation from \cite{kohlenbach2, dagsamIII}.  

\smallskip
\noindent
First of all, $\ACA_{0}$ is readily derived from the following sentence:
\begin{align}\label{mu}\tag{$\mu^{2}$}
(\exists \mu^{2})(\forall f^{1})\big[ (\exists n)(f(n)=0) \di [(f(\mu(f))=0)&\wedge (\forall i<\mu(f))f(i)\ne 0 ]\\
& \wedge [ (\forall n)(f(n)\ne0)\di   \mu(f)=0]    \big], \notag
\end{align}
and $\ACA_{0}^{\omega}\equiv\RCAo+(\mu^{2})$ proves the same sentences as $\ACA_{0}$ by \cite{hunterphd}*{Theorem~2.5}.   The (unique) functional $\mu^{2}$ in $(\mu^{2})$ is also called \emph{Feferman's $\mu$} (\cite{avi2}), 
and is clearly \emph{discontinuous} at $f=_{1}11\dots$; in fact, $(\mu^{2})$ is equivalent to the existence of $F:\R\di\R$ such that $F(x)=1$ if $x>_{\R}0$, and $0$ otherwise (\cite{kohlenbach2}*{\S3}), and to 
\be\label{muk}\tag{$\exists^{2}$}
(\exists \varphi^{2}\leq_{2}1)(\forall f^{1})\big[(\exists n)(f(n)=0) \asa \varphi(f)=0    \big]. 
\ee
\noindent
Secondly, $\FIVE$ is readily derived from the following sentence:
\be\tag{$\SS^{2}$}
(\exists\SS^{2}\leq_{2}1)(\forall f^{1})\big[  (\exists g^{1})(\forall n^{0})(f(\overline{g}n)=0)\asa \SS(f)=0  \big], 
\ee
and $\FIVE^{\omega}\equiv \RCAo+(\SS^{2})$ proves the same $\Pi_{3}^{1}$-sentences as $\FIVE$ by \cite{yamayamaharehare}*{Theorem 2.2}.   The (unique) functional $\SS^{2}$ in $(\SS^{2})$ is also called \emph{the Suslin functional} (\cite{kohlenbach2}).
By definition, the Suslin functional $\SS^{2}$ can decide whether a $\Sigma_{1}^{1}$-formula as in the left-hand side of $(\SS^{2})$ is true or false.  

\smallskip

We similarly define the functional $\SS_{k}^{2}$ which decides the truth or falsity of $\Sigma_{k}^{1}$-formulas; we also define 
the system $\SIXK$ as $\RCAo+(\SS_{k}^{2})$, where  $(\SS_{k}^{2})$ expresses that $\SS_{k}^{2}$ exists.  Note that we allow formulas with \emph{function} parameters, but \textbf{not} \emph{functionals} here.
In fact, Gandy's \emph{Superjump} (\cite{supergandy}) constitutes a way of extending $\FIVE^{\omega}$ to parameters of type two.  
We identify the functionals $\exists^{2}$ and $\SS_{0}^{2}$ and the systems $\ACAo$ and $\SIXK$ for $k=0$.
We note that the operators $\nu_{n}$ from \cite{boekskeopendoen}*{p.\ 129} are essentially $\SS_{n}^{2}$ strengthened to return a witness (if existant) to the $\Sigma_{k}^{1}$-formula at hand.  

\smallskip

\noindent
Thirdly, full second-order arithmetic $\Z_{2}$ is readily derived from $\cup_{k}\SIXK$, or from:
\be\tag{$\exists^{3}$}
(\exists E^{3}\leq_{3}1)(\forall Y^{2})\big[  (\exists f^{1})(Y(f)=0)\asa E(Y)=0  \big], 
\ee
and we therefore define $\Z_{2}^{\Omega}\equiv \RCAo+(\exists^{3})$ and $\Z_{2}^\omega\equiv \cup_{k}\SIXK$, which are conservative over $\Z_{2}$ by \cite{hunterphd}*{Cor.\ 2.6}. 
Despite this close connection, $\Z_{2}^{\omega}$ and $\Z_{2}^{\Omega}$ can behave quite differently, as discussed in e.g.\ \cite{dagsamIII}*{\S2.2}.   The functional from $(\exists^{3})$ is also called `$\exists^{3}$', and we use the same convention for other functionals.  

\smallskip

Finally, we mention the distinction between `normal' and `non-normal' functionals  based on the following definition from \cite{longmann}*{\S5.4}.  Kleene's computation schemes S1-S9 may be found in \cite{kleeneS1S9, longmann}.
In this paper, we only study statements in the language of third-order arithmetic, i.e.\ we only need Definition \ref{norma} for $n=2$.
\begin{defi}\label{norma}
For $n\geq 2$, a functional of type $n$ is called \emph{normal} if it computes Kleene's $\exists^{n}$ following S1-S9, and \emph{non-normal} otherwise.  
\end{defi}
Similarly, we call a statement about type $n$ objects ($n\geq 2$) \emph{normal} if it implies the existence of $\exists^{n}$ over $\RCAo$ from Section \ref{prelim1}, and \emph{non-normal} otherwise.  

\section{Main results}
\subsection{Introduction}\label{pintro}
The uncountability of $\R$ as in $\NIN$ and $\NBI$ follows from numerous basic theorems of ordinary mathematics, as established in \cite{dagsamX} and noted in Section \ref{sintro}. 
In this section, we derive $\NIN$ and/or $\NBI$ from basic theorems pertaining to \emph{Baire classes} (Section~\ref{afbaire}), \emph{unordered sums} (Section~\ref{uncountsyn}), and \emph{metric spaces} (Section \ref{donk}).
We also connect the latter to basic theorems about countable sets, as such theorems are `explosive' in that they can become much stronger, e.g.\ yielding $\SIX$ when combined with $\FIVE^{\omega}$. 

\smallskip

As it happens, $\NIN$ is one of the weakest theorems pertaining to countable sets:  it is namely equivalent to the following centred statement by \cite{samNEO}*{Theorem 3.1}:
\be\tag{\textsf{A}}\label{hong}
\textup{\emph{for countable $A\subset \R$, there is a real $y\in [0,1]$ different from all reals in $A$},}
\ee
which uses the usual definition of \emph{countable set} from Definition \ref{standard}.
By Definition~\ref{keepintireal}.\eqref{koer}, sets $A\subset \R$ are characteristic\footnote{When relevant, we assume $(\exists^{2})$ to make sure that the definition of `open set' from \cite{dagsamVII} also represents a characteristic function.} functions, as in e.g.\ \cites{kruisje, dagsamVI, dagsamVII, hunterphd, dagsamX}.     
\bdefi[Countable subset of $\R$]\label{standard}~
A set $A\subseteq \R$ is \emph{countable} if there exists $Y:\R\di \N$ such that $(\forall x, y\in A)(Y(x)=_{0}Y(y)\di x=_{\R}y)$. 
\edefi
This definition is from Kunen's textbook on set theory (\cite{kunen}*{p.\ 63}); we \emph{could} additionally require that $Y:\R\di \N$ in Definition \ref{standard} is also \emph{surjective}, as in e.g.\ \cite{hrbacekjech}.  
We refer to this stronger notion as `strongly countable', also studied in \cite{dagsamX}.

\smallskip

Now, a cursory search reveals that the word `countable' appears hundreds of times in the `bible' of RM \cite{simpson2}, and the same for \cite{dsliceke, simpson1}.  
Sections titles of \cite{simpson2} also reveal that the objects of study are `countable' rings, vector spaces, groups, et cetera.  
Of course, the above definition of `countable subset of $\R$' cannot be expressed in $\L_{2}$.  Indeed, all the 
aforementioned objects are given by \emph{sequences} in $\L_{2}$, which also constitutes the official definition of `countable set' as in \cite{simpson2}*{V.4.2}.  

\smallskip

In this light, the following `coding principle' $\cocode_{0}$ is \emph{crucial} to RM \emph{if} one wants the results in \cite{simpson2, simpson1, dsliceke} to have the same scope 
as third-order theorems about countable objects as in Definition \ref{standard}.  
This is particularly true for the RM of topology from \cites{mummy,mummyphd, mummymf}, as the latter is based on \emph{countable} bases at its very core.
\begin{princ}[$\cocode_{0}$]
For any non-empty countable set $A\subseteq [0,1]$, there is a sequence $(x_{n})_{n\in \N}$ in $A$ such that $(\forall x\in \R)(x\in A\asa (\exists n\in \N)(x_{n}=_{\R}x))$.
\end{princ}
As shown in \cite{samNEO}*{\S3}, $\cocode_{0}$ is equivalent to a number of natural principles, like the Bolzano-Weierstrass theorem for countable sets in $2^{\N}$.  
By \cite{dagsamX}*{\S3}, the latter theorem plus $\FIVE^{\omega}$ proves $\SIX$, while $\NIN$ does not have this property.  Note that one previously could only 
reach $\SIX$ via the RM of topology (\cites{mummy,mummyphd, mummymf}).  We note that $\FIVE^{\omega}$ is a conservative extension\footnote{The two final items of \cite{yamayamaharehare}*{Theorem 2.2} are (only) correct for $\QFAC$ replaced by $\QFAC^{0,1}$.} of $\FIVE$ for $\Pi_{3}^{1}$-formulas by \cite{yamayamaharehare}*{Theorem 2.2} and according to Rathjen in \cite{rathjenICM}*{\S3}, the strength of $\SIX$ \emph{dwarfs} that of $\FIVE$. 

\smallskip

In conclusion, while deriving $\NIN$ or $\NBI$ from a theorem of ordinary mathematics is interesting in its own right (as $\Z_{2}^{\omega}$ does not prove the former), 
deriving $\cocode_{0}$ puts the theorem in a completely different ballpark.  In contrast to \cite{dagsamX,samNEO}, some of the below theorems that imply $\cocode_{0}$ do not mention countable sets.   

\smallskip

Finally, we mention the `excluded middle trick' pioneered in \cite{dagsamV}.  As mentioned in Section \ref{HCT}, $(\exists^{2})$ is equivalent to the existence of a discontinuous function on $\R$ over $\RCAo$. 
Thus, $\neg(\exists^{2})$ is equivalent to \emph{all functions on $\R$ are continuous}, and the latter trivially implies $\NIN$ (and also $\cocode_{0}$).  
Hence, we have a proof of the latter in case of $\neg(\exists^{2})$.  If we now prove $X\di \NIN$ in $\ACAo$, the law of excluded middle as in $(\exists^{2})\vee \neg(\exists^{2})$ yields a proof of $X\di \NIN$ in $\RCAo$.
We will often make use of this trick when deriving $\NIN$ (or $\cocode_{0}$ or $\NBI$). 
\subsection{Baire classes}\label{afbaire}
In this section, we derive $\NIN$ from basic properties of Baire classes on the unit interval.

\smallskip

First of all, \emph{Baire classes} go back to Baire's 1899 dissertation (\cite{beren2}).
A function is `Baire class $0$' if it is continuous and `Baire class $n+1$' if it is the pointwise limit of Baire class $n$ functions.  
Each of these levels is non-trivial and there are functions that do not belong to any level, as shown by Lebesgue (see \cite{kleine}*{\S6.10}). 
Baire's \emph{characterisation theorem} (\cite{beren}*{p.\ 127}) expresses that a function is Baire class $1$ iff there is a point of continuity of the induced function on each perfect set.

\smallskip

Secondly, motivated by the previous, we consider the following principle $\BAIRE$ expressing that Baire class $2$ does not contain all functions.  We interpret `Baire class $1$' as the aforementioned equivalent condition involving perfect sets.  
The below principle $\BAIRE'$ deals with this equivalence. 
\begin{princ}[$\BAIRE$]
There is a function $f:[0,1]\di \R$ which is not the pointwise limit of Baire class $1$ functions on $[0,1]$. 
\end{princ}
The following proof still goes through if we require a modulus of convergence. 
\begin{thm}\label{CAS}
The system $\RCAo$ proves $\BAIRE\di \NIN$.
\end{thm}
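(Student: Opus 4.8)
The plan is to invoke the excluded middle trick from Section~\ref{pintro} and then prove the contrapositive $\neg\NIN\di\neg\BAIRE$. Since $\neg(\exists^{2})$ is equivalent to the statement that all functions on $\R$ are continuous and hence implies $\NIN$ outright, the implication $\BAIRE\di\NIN$ is trivial in that case; by the law of excluded middle on $(\exists^{2})$ it therefore suffices to establish $\BAIRE\di\NIN$ assuming $(\exists^{2})$, i.e.\ inside $\ACAo$. So I assume $(\exists^{2})$ and $\neg\NIN$, fixing an injection $Y:[0,1]\di\N$ respecting $=_{\R}$, and aim to show that \emph{every} $f:[0,1]\di\R$ is a pointwise limit of Baire class $1$ functions, which is exactly $\neg\BAIRE$.

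Given an arbitrary $f:[0,1]\di\R$, I define the finite modifications
\[
f_{n}(x):=\begin{cases} f(x) & \text{if } Y(x)<n,\\ 0 & \text{otherwise.}\end{cases}
\]
These are genuine functions $[0,1]\di\R$ (they respect $=_{\R}$ because $Y$ and $f$ do) and are available already in $\RCAo$ relative to $Y$ and $f$. For each fixed $x$ we have $f_{n}(x)=_{\R}f(x)$ as soon as $n>Y(x)$, so $f_{n}\to f$ pointwise with pointwise modulus $N(x):=Y(x)+1$; this is precisely what makes the parenthetical remark about a modulus of convergence go through.

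The crux is that each $f_{n}$ is Baire class $1$ in the perfect-set sense. Observe that $f_{n}$ is supported on the set $S_{n}:=\{x\in[0,1]:Y(x)<n\}$, which, by injectivity of $Y$, has at most $n$ elements up to $=_{\R}$. Fix a perfect $P\subseteq[0,1]$. Using that $P$ is nonempty and dense-in-itself, together with $(\exists^{2})$ deciding the arithmetical-in-$(Y,P)$ incidence statements and $\mu$ searching over the countably many rational intervals, I produce $n+1$ pairwise disjoint rational intervals $I_{0},\dots,I_{n}$ each meeting $P$. The key step is that $(\exists^{2})$ also decides, for each $j$, whether $(\exists x)(x\in P\cap I_{j}\wedge Y(x)<n)$; any interval for which this holds contributes a point of $S_{n}$, and disjointness forces these points to be distinct, so at most $n$ of the $I_{j}$ are of this kind. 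Hence some $I_{j_{0}}$ meets $P$ but not $S_{n}$, whence $f_{n}$ vanishes on the nonempty set $P\cap I_{j_{0}}$ and is therefore continuous on $P$ at any point of $P\cap I_{j_{0}}$, furnishing the required point of continuity. Consequently each $f_{n}$ is Baire class $1$, and every $f$ is the pointwise limit of the $f_{n}$, giving $\neg\BAIRE$.

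The main obstacle is exactly the perfect-set verification for $f_{n}$: the exceptional set $S_{n}$ is given only implicitly, and its at most $n$ points cannot be enumerated in $\ACAo$, so one cannot simply compute the distance from a candidate point to $S_{n}$. The disjoint-intervals-plus-pigeonhole device circumvents this by only ever asking $(\exists^{2})$ to \emph{decide} whether an interval meets $S_{n}$, never to produce a real witness; this is what keeps the argument within $\ACAo$, and hence, via the excluded middle trick, within $\RCAo$.
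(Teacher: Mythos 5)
Your overall strategy coincides with the paper's: assume an injection $Y:[0,1]\di\N$, form the finite modifications $f_{n}$, and observe pointwise convergence with modulus $Y(x)+1$. The paper's proof simply asserts that $f_{n}$, being discontinuous at no more than $n+1$ points, is Baire class $1$ in the perfect-set formulation, and does not invoke $(\exists^{2})$ at all. Where you try to make this assertion rigorous, however, your key step fails: $(\exists^{2})$ does \emph{not} decide statements of the form $(\exists x)(x\in P\cap I_{j}\wedge Y(x)<n)$. That formula contains an existential quantifier over \emph{reals} applied to the arbitrary third-order object $Y$; it is not arithmetical, and not even the Suslin functional helps, since $\SS^{2}$ only accepts type-one parameters (the paper stresses that formulas with \emph{functional} parameters are not allowed there). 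Deciding whether an interval meets $S_{n}=\{x:Y(x)<n\}$ amounts to asking for a fragment of the range of $Y$, and the paper confronts exactly this obstacle in the proof of Theorem \ref{franoc}, where it has to add the full induction axiom $\IND$ to obtain bounded comprehension for $(\exists x\in \R)(Y(x)=m)$ with $m\leq n$, together with finite choice for real witnesses. Your closing paragraph claims that asking $(\exists^{2})$ merely to \emph{decide} incidence, rather than to produce a witness, keeps the argument inside $\ACAo$ --- but the decision itself is already unavailable there.

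Even granting those decisions, the pigeonhole step ``at most $n$ of the $I_{j}$ meet $S_{n}$'' requires extracting $n+1$ pairwise distinct real witnesses from $n+1$ existential statements over $[0,1]$ with $n$ a variable, i.e.\ finite choice for reals, which again costs $\IND$ (or $\QFAC^{0,1}$) rather than $\RCAo$. So as written your argument does not close the gap: it replaces the paper's terse (and itself unargued) claim that finitely many discontinuities yield a point of continuity on every perfect set by an appeal to a decidability principle that is false at type two. A repair along your lines should explicitly assume the extra induction or choice (as Theorem \ref{franoc} does), or else find a verification of the perfect-set condition that avoids both deciding real-quantified formulas in $Y$ and extracting witnesses from them.
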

\begin{proof}
Let $Y:[0,1]\di \N$ be an injection and let $f:[0,1]\di \R$ be any function.  Define the function $f_{n}:[0,1]\di \R$ as follows:
\[
f_{n}(x):=
\begin{cases}
f(x) & Y(x)\leq n \\
0 & \textup{otherwise}
\end{cases}.
\]
Then $f_{n}$ is only (potentially) discontinuous at $n+1$ points, i.e.\ $f_{n}$ is of Baire class $1$ in the aforementioned formulation involving perfect sets.  Nonetheless, we clearly have $\lim_{n\di \infty}f_{n}=f$, even with a modulus as $f_{n}(x)=f(x)$ if $n\geq Y(x)$.
\end{proof}
Thirdly, the aforementioned characterisation theorem by Baire also gives rise to $\NIN$.  Indeed, the following theorem is the `sequential' version of the former.  
Recall we use the equivalent formulation of `Baire class 1' involving perfect sets. 
\begin{princ}[$\BAIRE'$]
Let $(f_{n})_{n\in \N}$ be a sequence of Baire class $1$ functions such that $\lim_{n\di\infty}f_{n}=f$.
Then there is a sequence $(g_{m, n})_{m, n\in \N}$ of continuous functions such that $\lim_{m\di \infty}g_{m, n}=f_{n}$ for each $n\in \N$ \(and $\lim_{n\di \infty}\lim_{m\di \infty}g_{m, n}=f$\).
\end{princ}
A proof of $\BAIRE'$ seems to require the Axiom of Choice to combine the approximations to $f_{n}$ into a sequence.  This explains the base theory in Theorem \ref{ACAC}. 
\begin{thm}\label{ACAC}
The system $\RCAo+\QFAC^{0,1}$ proves $\BAIRE'\di \NIN$.
\end{thm}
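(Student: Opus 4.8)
The plan is to mirror the proof of Theorem \ref{CAS} but to extract more from the conclusion of $\BAIRE'$, namely the ability to invert an alleged injection. By the excluded-middle trick from Section \ref{pintro} it suffices to derive $\NIN$ inside $\ACAo+\QFAC^{0,1}$, since $\neg(\exists^{2})$ already yields $\NIN$ outright; so I assume $(\exists^{2})$ and, towards a contradiction, fix an injection $Y:[0,1]\di \N$. Taking $f$ to be the identity on $[0,1]$, I define $f_{n}(x):=f(x)$ if $Y(x)\leq n$ and $f_{n}(x):=0$ otherwise, exactly as in Theorem \ref{CAS}. Since $Y$ is injective, the set $S_{n}:=\{x\in[0,1]:Y(x)\leq n\}$ has at most $n+1$ elements, so $f_{n}$ is discontinuous at only finitely many points; as every nonempty perfect set is uncountable while $S_{n}$ is finite, the restriction of $f_{n}$ to any such set is continuous at some point, i.e.\ $f_{n}$ is Baire class $1$ in the perfect-set formulation. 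Moreover $\lim_{n\di\infty}f_{n}=f$, so the hypothesis of $\BAIRE'$ is met.

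Applying $\BAIRE'$ yields a sequence $(g_{m,n})_{m,n\in\N}$ of continuous functions with $\lim_{m\di\infty}g_{m,n}=f_{n}$ for each $n$. The heart of the argument is to use this sequence to locate the preimages of $Y$. For each $n\geq 1$ the difference $f_{n}-f_{n-1}$ is supported on $S_{n}\setminus S_{n-1}$, which is either empty or a single point $p_{n}$, the unique $x$ with $Y(x)=n$; and there $f_{n}(p_{n})=p_{n}$ since $f$ is the identity. Thus $g_{m,n}-g_{m,n-1}$ converges pointwise to a function that equals $p_{n}$ at $p_{n}$ and vanishes off $S_{n}$, in particular vanishing at every rational outside the finite set $S_{n}$. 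I plan to read off $p_{n}$ to arbitrary dyadic precision from the continuous functions $g_{m,n}$ and then to invoke $\QFAC^{0,1}$ to assemble these approximations, over all $n$, into a genuine sequence of reals $(x_{k})_{k\in\N}$ with $x_{k}=_{\R}Y^{-1}(k)$ whenever $k\in\range(Y)$ (and $x_{k}:=0$ otherwise). This is precisely the step where the Axiom of Choice enters, as foreshadowed before the statement.

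With $(x_{k})_{k\in\N}$ in hand the contradiction is immediate from Cantor's theorem. Indeed every $x\in[0,1]$ satisfies $x=_{\R}x_{Y(x)}$, so the sequence $(x_{k})_{k\in\N}$ exhausts $[0,1]$; but Theorem \ref{lanto}, already available in $\RCAo$, produces a real $y\in[0,1]$ with $y\neq_{\R}x_{k}$ for all $k\in\N$, contradicting $y=_{\R}x_{Y(y)}$. Hence no injection $Y$ exists, which is $\NIN$; combining with the $\neg(\exists^{2})$ case via excluded middle gives the result in $\RCAo+\QFAC^{0,1}$.

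The main obstacle is the locating step, and it is subtle because $\BAIRE'$ delivers only \emph{pointwise} convergence $g_{m,n}\di f_{n}$: the spike of $g_{m,n}$ near $p_{n}$ need not converge uniformly, so crude searches via suprema or integrals can fail to pin down $p_{n}$. I would resolve this by first using $\QFAC^{0,1}$ to select, for each $n$, a modulus of pointwise convergence for $(g_{m,n})_{m}$ at rational arguments; since $f_{n}$ vanishes at all but finitely many rationals, this modulus together with the continuity of the $g_{m,n}$ lets me trap $p_{n}$ between rationals and generate the fast-converging Cauchy sequence representing $x_{n}$. An alternative, fully sidestepping the non-uniformity, is to strengthen $\BAIRE'$ so that its conclusion carries a modulus of convergence, which is legitimate in the same spirit as the remark preceding Theorem \ref{CAS}; under that reading the locating step becomes routine and the use of $\QFAC^{0,1}$ reduces to merely collecting the witnesses $x_{n}$ into a sequence.
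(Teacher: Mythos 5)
Your overall architecture (build $f_n$ from the alleged injection, apply $\BAIRE'$, combine the resulting continuous $g_{m,n}$ with $\QFAC^{0,1}$ and Cantor's theorem to reach a contradiction) matches the paper's, but the step you yourself flag as ``the main obstacle'' --- recovering $p_{n}=Y^{-1}(n)$ from the $g_{m,n}$ --- is a genuine gap, and neither of your proposed fixes closes it. Pointwise convergence of continuous functions to a function supported on the single point $p_{n}$ carries no information about the location of $p_{n}$ that is accessible through rational arguments: $g_{m,n}-g_{m,n-1}$ may be a spike of width $1/m$ centred at an irrational $p_{n}$, so that at every rational $q$ the values are eventually $0$. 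A modulus of pointwise convergence at rational arguments (your first fix) then only records how fast these values die off and cannot ``trap'' $p_{n}$ between rationals, since the moduli of continuity of the $g_{m,n}$ may degenerate as $m\di\infty$; and the modulus in the sense of the remark before Theorem \ref{CAS} (your second fix) is still a modulus of \emph{pointwise} convergence --- a uniform one is impossible because the limit is discontinuous --- so evaluating it at rationals has the same defect. Your choice of $f=$ identity makes matters worse: $f_{n}(p_{n})=p_{n}$ can be arbitrarily small, so even the presence of a nonzero spike is undetectable for $p_{n}$ near $0$.

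The paper's proof is structured precisely so that no real number is ever extracted from the $g_{m,n}$. It takes $f\equiv 1$, and uses the continuity of the $g_{m,n}$ only to express membership of $n$ in the range of $Y$ by an arithmetical formula whose real quantifier has been replaced by a quantifier over $\Q$ (roughly, $(\exists q\in \Q\cap[0,1])(\exists M)(\forall m\geq M)(g_{m,n}(q)>\frac12)$), so that $(\exists^{2})$ yields the range $X\subseteq \N$ as a set. The preimages $x_{n}$ with $Y(x_{n})=n$ for $n\in X$ are then produced by applying $\QFAC^{0,1}$ to $(\forall n\in\N)(n\in X\di(\exists x\in[0,1])(Y(x)=n))$ --- that is, they come from choice, not from the approximating functions --- after which Cantor's theorem (Theorem \ref{lanto}) finishes exactly as in your last paragraph. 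To repair your argument, redirect the use of the $g_{m,n}$ from ``locating $p_{n}$'' to ``defining the range of $Y$ arithmetically'', and let $\QFAC^{0,1}$ supply the witnesses.
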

\begin{proof}
Let $Y:[0,1]\di \N$ be an injection and note that we have $(\exists^{2})$ by \cite{kohlenbach2}*{\S3}.
Consider the sequence $(f_{n})_{n\in \N}$ defined as follows:
\be\label{farflung}
f_{n}(x):=
\begin{cases}
1 & Y(x)\leq n \\
0 & \textup{otherwise}
\end{cases},
\ee
and note that $f_{n}:[0,1]\di \R$ has at most $n+1$ discontinuities, i.e.\ it is Baire class $1$ by the definition involving perfect sets.  
As in the proof of Theorem \ref{CAS}, we have $\lim_{n\di \infty}f_{n}=1$.  Hence, let $(g_{m,n})_{m,n\in \N}$ be the sequence of continuous functions provided by $\BAIRE'$.  
We have the following equivalences:
\begin{align*}
(\exists x\in [0,1])(Y(x)=n)
&\asa (\exists y\in [0,1])(f_{n}(y)=1)\\
&\textstyle \asa (\exists z\in [0,1])(\exists M\in \N)(\forall m\geq M)(g_{m, n}(z)>\frac{3}{4}  )\\
&\textstyle \asa (\exists q\in [0,1]\cap \Q)(\exists M\in \N)(\forall m\geq M)(g_{m, n}(q)>\frac{1}{2}  ),
\end{align*}
where the final equivalence follows by the continuity of $g_{m,n}$ for $m,n\in \N$.  
The final formula only involves quantifiers over $\N$ (up to coding) and using $(\exists^{2})$, there is $X\subset \N$ such that $n\in X\asa (\exists x\in [0,1])(Y(x)=n)$, i.e.\ $X$ is the range of $Y$.

\smallskip

Now apply $\QFAC^{0,1}$ to $(\forall n\in \N)(n\in X\di (\exists x\in [0,1])(Y(x)=n))$ and let $(x_{n})_{n\in \N}$ be the resulting sequence.  
Let $y\in [0,1]$ be such that $y\ne_{\R} x_{n}$ for all $n\in \N$, as provided by \cite{simpson2}*{II.4.9}.
Then $n_{0}:=Y(y)$ is such that $n_{0}\in X$ and hence $Y(x_{n_{0}})=n_{0}$ by definition, which is a contradiction as $y\ne_{\R} x_{n_{0}}$. 
\end{proof}
\noindent
Try as we might, the previous proofs (or principles) do not seem to yield $\cocode_{0}$.  

\smallskip

Finally, the previous results are interesting as $\Z_{2}^{\omega}+\QFAC^{0,1}$ cannot prove $\NIN$ by \cite{dagsamX}*{Theorem 3.1}.  
Hence, assuming $\Z_{2}$ is consistent, the stronger system $\Z_{2}^{\omega}+\QFAC^{0,1}+\neg \NIN$ is also consistent.  
Therefore, if one believes that (most) of ordinary mathematics can be developed in $\Z_{2}$ (see e.g.\ \cite{simpson2}*{p.\ xiv} for this claim), 
it is consistent with ordinary mathematics that there is nothing beyond Baire class $2$.  In our opinion, this shows that ordinary mathematics (beyond continuous functions)
cannot be developed in $\Z_{2}$ and that third-order arithmetic is needed.

%
%
%
%

\subsection{Unordered sums}\label{uncountsyn}
In this section, we consider \emph{unordered sums}, which are a device for bestowing meaning upon `uncountable sums' $\sum_{x\in I}f(x)$ for any index set $I$ and $f:I\di \R$.  
A central result is that if $\sum_{x\in I}f(x)$ somehow exists, it must be a `normal' series of the form $\sum_{i\in \N}f(y_{i})$; Tao mentions this theorem in \cite{taomes}*{p.~xii}. 
We show that basic versions of this theorem yield $\NIN$, $\cocode_{0}$, and $\NBI$.  As it turns out, the exact formulation of `$\sum_{x\in I}f(x)$ exists' makes a huge difference.  

\smallskip

First of all, by way of motivation, there is considerable historical and conceptual interest in this topic: Kelley notes in \cite{ooskelly}*{p.\ 64} that E.H.\ Moore's study of unordered sums in \cite{moorelimit2} led to the concept of \emph{net} with his student H.L.\ Smith (\cite{moorsmidje}).
Unordered sums can be found in (self-proclaimed) basic or applied textbooks (\cites{hunterapp,sohrab}) and can be used to develop measure theory (\cite{ooskelly}*{p.\ 79}).  
Moreover, Tukey shows in \cite{tukey1} that topology can be developed using \emph{phalanxes}, which are nets with the same index sets as unordered sums.  

\smallskip

Secondly, we have previously studied the RM of nets in \cites{samcie19, samnetspilot, samwollic19}, to which we refer for the definition of net in $\RCAo$.
Now, an unordered sum is just a special kind of \emph{net} and $a:[0,1]\di \R$ is therefore written $(a_{x})_{x\in [0,1]} $ to suggest the connection to nets.  
Let $\fin(\R)$ be the set of all finite sequences of reals without repetitions.  
\begin{defi}\label{kaukie2}\rm
We say that $(a_{x})_{x\in [0,1]}$ is \emph{summable} if $\lim_{F}x_{F}$ exists for the net $x_{F}:=\lambda F.\sum_{x\in F}a_{x}$ where $F$ ranges over $\fin(\R)$ and is ordered by inclusion.  
We write $\sum_{x\in [0,1]}a_{x}:=\lim_{F}x_{F}$ in case $(a_{x})_{x\in [0,1]}$ is summable. 
\end{defi}
Summability of course is equivalent to a version of the Cauchy criterion (see e.g.\ \cite{sohrab}*{p.\ 74} or \cite{hunterapp}*{p.\ 136}), as follows.   
\bdefi\label{kaukie}
We say that $(a_{x})_{x\in [0,1]} $ is \emph{Cauchy} if for $\eps>0$ there is $I\in \fin(\R)$ such that for all $J\in \fin({\R})$ with $J\cap I=\emptyset$, we have $|\sum_{x\in J}a_{x}|<\eps$.
\edefi
The following result, immediate from e.g.\ \cite{sohrab}*{Cor.\ 2.4.4} or \cite{hunterapp}*{p.\ 136}, expresses that if $\sum_{x\in [0,1]}a_{x}$ exists, it is actually just a `normal' series $\sum_{i\in \N}a_{y_{i}}$.
\begin{princ}[$\SUM$]
If $(a_{x})_{x\in [0,1]}$ is Cauchy and non-negative, then there is a sequence of reals $(y_{i})_{i\in \N}$ in $[0,1]$ such that $a_{y}=_{\R}0$ in case $(\forall i\in \N)(y\ne_{\R} y_{i})$.  
\end{princ}
One readily proves $\SUM$ using $\BOOT+\QFAC^{0,1}$ based on \cite{samph}*{\S3.2}. 
Now let $\IND$ be the induction axiom for formulas in $\L_{\omega}$. 
\begin{thm}\label{franoc}
The system $\RCAo+\IND$ proves $\SUM \di \NIN$. 
\end{thm}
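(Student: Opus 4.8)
The plan is to assume an injection $Y:[0,1]\di \N$ and manufacture from it a non-negative, Cauchy family whose ``support'' is all of $[0,1]$, so that $\SUM$ is forced to enumerate $[0,1]$ by a sequence, contradicting Cantor's theorem (\cite{simpson2}*{II.4.9}). Concretely, I would set $a_{x}:=\frac{1}{2^{Y(x)+1}}$ for $x\in[0,1]$; this is a genuine function on $[0,1]$ since $Y$ is extensional, and it is strictly positive everywhere. Injectivity of $Y$ is the whole point: the reals in any repetition-free $J\in\fin(\R)$ receive pairwise distinct values $Y(x)$, so their masses form a subsum of $\sum_{n}2^{-n-1}$ and hence total at most $1$. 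This is what will make the family Cauchy.

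Verifying the Cauchy criterion (Definition \ref{kaukie}) is the crux, and I expect it to be the main obstacle. Given $\eps>0$, fix $N$ with $2^{-N}<\eps$; then any $I\in\fin(\R)$ containing every $x$ with $Y(x)\leq N$ works, because each $x$ in a set $J$ disjoint from $I$ satisfies $Y(x)>N$, whence $\sum_{x\in J}a_{x}\leq\sum_{m>N}2^{-m-1}=2^{-N-1}<\eps$. The difficulty is producing such a finite $I$: the ``heavy'' points $\{x:Y(x)\leq N\}$ number at most $N+1$ by injectivity, but locating them amounts to deciding the $\Sigma_{1}^{1}$ statements $(\exists x\in[0,1])(Y(x)=n)$, which we cannot perform.

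This is exactly where $\IND$ enters, and I would resolve the obstacle by proving, by induction on $N$, the statement
\[
Q(N):\quad (\exists I\in\fin(\R))(\forall x\in[0,1])\big(Y(x)\leq_{0} N\di (\exists i<|I|)(I(i)=_{\R}x)\big).
\]
The key point is that $Q(N)$ only asserts the \emph{existence} of a listing, so the inductive step needs no decision procedure: assuming $Q(N)$ via some $I$, a classical case split on $(\exists x\in[0,1])(Y(x)=N+1)$ suffices — in the positive case one appends such a witness $x$ (which is new, since $I$ lists only points of value $\leq N$), and in the negative case $I$ itself already witnesses $Q(N+1)$. The base case $Q(0)$ is the same case split at $n=0$. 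Since $Q(N)$ quantifies over reals and over finite sequences of reals, it is far from quantifier-free, so full induction $\IND$ (rather than the quantifier-free induction of $\RCAo$) is genuinely required here.

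With $(\forall N)Q(N)$ in hand, the family $(a_{x})_{x\in[0,1]}$ is Cauchy and non-negative, so $\SUM$ yields a sequence $(y_{i})_{i\in\N}$ in $[0,1]$ with $a_{y}=_{\R}0$ whenever $(\forall i)(y\ne_{\R}y_{i})$. As $a_{y}>_{\R}0$ for every $y\in[0,1]$, no such $y$ can exist, i.e.\ $(y_{i})_{i\in\N}$ enumerates all of $[0,1]$; this contradicts the existence, provided by \cite{simpson2}*{II.4.9}, of a real in $[0,1]$ distinct from all $y_{i}$. I expect the whole argument to go through in $\RCAo+\IND$ without invoking $(\exists^{2})$, $\QFAC^{0,1}$, or the excluded-middle trick, since nowhere do we form the range of $Y$ as a set or choose witnesses uniformly — the only ingredient beyond $\RCAo$ is the single application of $\IND$ to $Q(N)$.
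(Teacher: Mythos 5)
Your proposal is correct and follows essentially the same route as the paper: the same witness family $a_{x}=2^{-Y(x)}$ (up to a harmless shift), the same use of $\IND$ to produce a finite list of all points with $Y$-value at most $N$ so that the Cauchy criterion can be verified from injectivity, and the same final contradiction via Cantor's theorem \cite{simpson2}*{II.4.9}. The only difference is organisational: the paper reaches that finite list through two intermediate consequences of $\IND$ (bounded comprehension for the range of $Y$ plus a finite choice principle), whereas you run a single direct induction on $Q(N)$; your remark that $(\exists^{2})$ is not essential here is compatible with the paper, which assumes it only as a blanket convenience via the excluded-middle trick.
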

\begin{proof}
As noted in Section \ref{pintro}, we may assume $(\exists^{2})$.
We first note some technical results that pertain to induction.

\smallskip

First of all, it is well-known that the induction axiom yields `bounded comprehension'; for instance, $\Sigma_{1}^{0}$-induction implies that for $f:\N^{2}\di\N$ and $n\in \N$, 
there is a set $X\subset \N$ such that $m\in X\asa (\exists k\in \N)(f(m,k)=0)$ for any $m\leq n$ (\cite{simpson2}*{II.3.9}).
One similarly establishes that $\IND$ implies that for $Y:\R\di \N$ and $n\in \N$, there is $X\subset \N$ such that $m\in X\asa (\exists x\in \R)(Y(x)=m)$ for any $m\leq n$.
In this way, finite segments of the range of $Y:\R\di \N$ exists given $\IND$.  

\smallskip

Secondly, the following is from the proof of \cite{dagsamV}*{Cor.\ 4.7}.  It is well-known that $\ZF$ proves the ÔfiniteÕ axiom of choice via mathematical induction (see e.g.\ \cite{tournedous}*{Ch.~IV}). Similarly, one uses $\IND$ to prove for $Z:(\N^{\N}\times \N)\di \N$ and $ n\in \N$:
\[
(\forall m\leq n)(\exists f\in \N^{\N})(Z(f,m)=0)\di (\exists w^{1^{*}})\big[|w|=n+1\wedge  (\forall m\leq n)(Z(w(i), m)=0)].
\]
One readily replaces variables over $\N^{\N}$ by variables over $\R$ or $[0,1]$.

\smallskip

Thirdly, fix an injection $Y:[0,1]\di \N$ and define $a_{x}:= \frac{1}{2^{Y(x)}}$.  To show that $(a_{x})_{x\in [0,1]}$ is Cauchy, fix $\eps>_{\R}0$ and let $n\in \N$ be such that $\frac{1}{2^{n}}<\eps$.
Let $X$ be such that $m\in X\asa (\exists x\in [0,1])(Y(x)=m)$ for any $m\leq n$ and define $Z(x, k)$ as $0$ in case $(k\not\in X)\vee [Y(x)=k \wedge k\in X]$, and $1$ otherwise.
Clearly, $(\forall m\leq n)(\exists x\in [0,1])(Z(x,m)=0)$ and let $w=\langle y_{0}, \dots, y_{n}\rangle$ be a finite sequence of reals of length $n+1$ such that $(\forall m\leq n)(Z(w(m), m)=0)$. 
Let $v$ be $w$ minus all $w(i)$ that do not satisfy $Y(w(i))=i$ for $i<|w|$.  Then we have for $m\leq n$:
\be\label{hug}
m\in X\asa (\exists x\in [0,1])(Y(x)=m)\asa (\exists j<|v|)(Y(v(j))=m).
\ee
Let $I$ be the finite set consisting of the reals in $v$.   Since $Y:[0,1]\di \N$ is an injection, we have  $|\sum_{x\in J}a_{x}|\leq \sum_{m=n+1}^{\infty}\frac{1}{2^{m}}= \frac{1}{2^{n}}<\eps$ for $J\in \fin(\R)$ such that $I\cap J=\emptyset$.  
Hence, $(a_{x})_{x\in [0,1]}$ is Cauchy and let $(y_{i})_{i\in \N}$ be as provided by $\SUM$.  Now find $z_{0}\in [0,1]$ not in this sequence using \cite{simpson2}*{II.4.9} and note that $a_{z_{0}}=\frac{1}{2^{Y(z_{0})}}=0$ yields a contradiction, and $\NIN$ follows. 
\end{proof}
We could weaken $\SUM$ to the conclusion that for \emph{almost all} $y\in [0,1]$, $a_{y}=_{\R}0$ and the previous proof still goes through.
We could also restrict $\SUM$ to $a:[0,1]\di \R^{+}\cup\{0\}$ having uniformly bounded sums $\sum_{x\in F}a_{x}$ for $F\in \fin(\R)$ as in \cite{ooskelly}*{p.\ 78}.
The following corollary however needs the `full strength' of $\SUM$.
\begin{cor}
The system $\RCAo+\IND$ proves $\SUM\di \cocode_{0}$.  
\end{cor}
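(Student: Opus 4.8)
The plan is to adapt the proof of Theorem~\ref{franoc}, replacing the final derivation of a contradiction with an actual construction of the enumerating sequence demanded by $\cocode_{0}$. By the excluded middle trick from Section~\ref{pintro}, it suffices to establish $\SUM\di\cocode_{0}$ in $\ACAo+\IND$, since $\neg(\exists^{2})$ already yields $\cocode_{0}$ outright; I would therefore assume $(\exists^{2})$ throughout, exactly as in Theorem~\ref{franoc}. So let $A\subseteq[0,1]$ be a non-empty countable set, witnessed by some $Y:\R\di\N$ that is injective on $A$ as in Definition~\ref{standard}, and fix a point $a_{0}\in A$. Using $(\exists^{2})$ and the characteristic function $F_{A}$, define $a:[0,1]\di\R$ by $a_{x}:=\frac{1}{2^{Y(x)}}$ if $x\in A$ and $a_{x}:=0$ otherwise; this is a well-defined, non-negative function on $[0,1]$.

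First I would show that $(a_{x})_{x\in[0,1]}$ is Cauchy, by the argument of Theorem~\ref{franoc} relativised to $A$. Given $\eps>_{\R}0$, pick $n$ with $\frac{1}{2^{n}}<\eps$; the $\IND$-based bounded comprehension and finite-choice steps from that proof then produce a finite set $I\in\fin(\R)$ containing, for each $m\le n$ with $(\exists x\in A)(Y(x)=m)$, a representative $x\in A$ with $Y(x)=m$. For any $J\in\fin(\R)$ with $I\cap J=\emptyset$, every $x\in J\cap A$ must satisfy $Y(x)>n$: otherwise $I$ would contain a representative of $Y(x)\le n$, which equals $x$ by injectivity of $Y$ on $A$, contradicting $x\in J$. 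Since $Y$ is injective on $A$, the values $Y(x)$ for $x\in J\cap A$ are distinct and all exceed $n$, so $\big|\sum_{x\in J}a_{x}\big|\le\sum_{m=n+1}^{\infty}\frac{1}{2^{m}}=\frac{1}{2^{n}}<\eps$, as required.

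Next I would apply $\SUM$ to $(a_{x})_{x\in[0,1]}$, obtaining a sequence $(y_{i})_{i\in\N}$ in $[0,1]$ with $a_{y}=_{\R}0$ whenever $(\forall i)(y\ne_{\R}y_{i})$. Contraposing, any $y$ with $a_{y}\ne_{\R}0$ satisfies $(\exists i)(y=_{\R}y_{i})$; in particular every $x\in A$, for which $a_{x}=\frac{1}{2^{Y(x)}}>_{\R}0$, occurs among the $y_{i}$. To land the enumeration inside $A$, I would filter: using $(\exists^{2})$ and $F_{A}$, set $x_{n}:=y_{n}$ if $y_{n}\in A$, and $x_{n}:=a_{0}$ otherwise. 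Then $x_{n}\in A$ for all $n$, and for $x\in A$ we have $x=_{\R}y_{i}$ for some $i$, whence $y_{i}\in A$ and $x_{i}=_{\R}y_{i}=_{\R}x$; conversely $x_{n}\in A$ forces $x\in A$ whenever $x=_{\R}x_{n}$. Thus $(\forall x\in\R)(x\in A\asa(\exists n)(x_{n}=_{\R}x))$, which is precisely $\cocode_{0}$.

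The Cauchy estimate is the routine part, transferring essentially verbatim from Theorem~\ref{franoc}. The step that genuinely requires the \emph{full} strength of $\SUM$, rather than the weakenings discussed after Theorem~\ref{franoc}, is the extraction of the sequence $(y_{i})$: a conclusion such as ``$a_{y}=_{\R}0$ for almost all $y$'' would still suffice for the contradiction in the $\NIN$ argument, but it would not hand us an explicit sequence meeting every element of $A$, and it is exactly this sequence, after filtering into $A$, that produces the enumeration. I do not anticipate a serious obstacle beyond bookkeeping; the only delicate point is ensuring the filtered sequence both lies in $A$ and still hits every point of $A$, which is secured by the non-emptiness of $A$ (supplying $a_{0}$) together with the injectivity of $Y$ on $A$.
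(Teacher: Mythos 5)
Your proof is correct and follows essentially the same route as the paper's: define $a_{x}:=\frac{1}{2^{Y(x)}}$ on $A$ and $0$ off $A$, verify the Cauchy condition by relativising the $\IND$-argument of Theorem~\ref{franoc}, apply $\SUM$, and then clean up the resulting sequence so it lies in and enumerates $A$. The only cosmetic difference is that you replace off-$A$ terms by a fixed $a_{0}\in A$ where the paper trims them away using $\exists^{2}$; both yield the required equivalence $x\in A\asa(\exists n)(x_{n}=_{\R}x)$.
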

\begin{proof}
Fix a countable set $A\subset [0,1]$ and let $Y$ be injective on $A$.  
Define $a_{x}$ as $\frac{1}{2^{Y(x)}}$ if $x\in A$, and zero otherwise. 
Using \eqref{hug}, one similarly proves that $(a_{x})_{x\in [0,1]}$ is Cauchy, and let $(y_{i\in \N})$ be as provided by $\SUM$.  
If necessary, trim the sequence $(y_{i})_{i\in \N}$ using $\exists^{2}$ to make sure $a_{y_{i}}\ne_{\R} 0$ for all $i\in \N$. 
By definition, we have the following for all $x\in [0,1]$:
\[
x\in A\asa a_{x}\ne_{\R} 0\asa (\exists i\in \N)(x=_{\R}y_{i}), 
\]
which immediately yields $\cocode_{0}$, as required. 
\end{proof}
%
Next, we study a version of $\SUM$ involving summability as in Definition \ref{kaukie2}.
\begin{princ}[$\SUM'$]
If $(a_{x})_{x\in [0,1]}$ is summable and non-negative, then there is a sequence of reals $(y_{i})_{i\in \N}$ in $[0,1]$ such that $a_{y}=_{\R}0$ in case $(\forall i\in \N)(y\ne_{\R} y_{i})$.  
\end{princ}
To make sure (the net in) $\SUM'$ is well-defined, we shall always assume $(\exists^{2})$ to be given, an assumption we did not have to make for $\SUM$.  
\begin{thm}\label{quil}
The system $\ACAo+\IND$ proves $\SUM'\di \NBI$, while the system $\ACAo$ proves $\QFAC^{0,1}\di \SUM'$.
\end{thm}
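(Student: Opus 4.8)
The plan is to prove the two implications separately, reusing the finite-choice mechanism of Theorem~\ref{franoc} for the first and a single application of $\QFAC^{0,1}$ for the second.

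For the implication $\SUM'\di\NBI$ over $\ACAo+\IND$, I would assume $\SUM'$ and, towards a contradiction, fix a bijection $Y:[0,1]\di\N$; note that $(\exists^{2})$ is available from $\ACAo$. Setting $a_{x}:=2^{-Y(x)}$ gives a non-negative (in fact positive) family, and the first task is to show $(a_{x})_{x\in[0,1]}$ is \emph{summable} with sum $2$. Since $Y$ is injective, distinct reals receive distinct $Y$-values, so every $F\in\fin(\R)$ satisfies $\sum_{x\in F}a_{x}\leq\sum_{n=0}^{\infty}2^{-n}=2$; thus the net $\lambda F.\sum_{x\in F}a_{x}$, monotone by non-negativity, is bounded by $2$. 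Since $Y$ is surjective, every $m\leq n$ has a preimage, and exactly as in the $\IND$-based finite-choice argument of Theorem~\ref{franoc} I can assemble a finite sequence $\langle x_{0},\dots,x_{n}\rangle$ with $Y(x_{m})=m$. Taking $F_{0}:=\{x_{0},\dots,x_{n}\}$ forces $\sum_{x\in F}a_{x}\in[2-2^{-n},2]$ for every $F\supseteq F_{0}$, which is precisely convergence of the net to $2$; hence $(a_{x})$ is summable. Applying $\SUM'$ yields $(y_{i})_{i\in\N}$ with $a_{y}=_{\R}0$ whenever $y\neq_{\R}y_{i}$ for all $i$. A real $z_{0}\notin\{y_{i}:i\in\N\}$ exists by Cantor's theorem (\cite{simpson2}*{II.4.9}), but $a_{z_{0}}=2^{-Y(z_{0})}>_{\R}0$, a contradiction; so no bijection exists, i.e.\ $\NBI$.

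For the implication $\QFAC^{0,1}\di\SUM'$ over $\ACAo$, let $(a_{x})_{x\in[0,1]}$ be summable and non-negative with limit $L:=\sum_{x\in[0,1]}a_{x}$. By non-negativity the net is monotone, so $L=\sup_{F\in\fin(\R)}\sum_{x\in F}a_{x}$ and $\sum_{x\in F}a_{x}\leq L$ for all $F$. For each $k$, convergence to $L$ supplies some $F\in\fin(\R)$ with $\sum_{x\in F}a_{x}>L-2^{-k}$. The matrix ``$F$ has no repetitions and $\sum_{x\in F}a_{x}>L-2^{-k}$'' involves only a finite sum and a comparison of reals, both decidable by a term built from $(\exists^{2})$, so it is quantifier-free in the sense required by $\QFAC^{0,1}$. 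Applying $\QFAC^{0,1}$ yields $(F_{k})_{k\in\N}$ with $\sum_{x\in F_{k}}a_{x}>L-2^{-k}$, and concatenating the $F_{k}$ into a single sequence $(y_{i})_{i\in\N}$ (available already in $\RCAo$) gives the witness for $\SUM'$: if some $y$ satisfied $a_{y}>_{\R}0$ yet $y\neq_{\R}y_{i}$ for all $i$, choose $k$ with $2^{-k}<a_{y}$; then $y\notin F_{k}$, whence $\sum_{x\in F_{k}\cup\{y\}}a_{x}=\sum_{x\in F_{k}}a_{x}+a_{y}>L$, contradicting $\sum_{x\in F}a_{x}\leq L$. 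Thus $a_{y}=_{\R}0$ whenever $y\neq_{\R}y_{i}$ for all $i$.

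The geometric-series estimates are routine. The delicate point in the second implication is the reduction of the $\QFAC^{0,1}$ matrix to quantifier-free form: since $<_{\R}$ is defined with a number quantifier (Definition~\ref{keepintireal}), I must invoke $(\exists^{2})$ to replace real-comparison by a term before choice applies, which is exactly why the statement is set over $\ACAo$ rather than $\RCAo$. The conceptual obstacle in the first implication is not summability itself but recognising that only a \emph{bijection} makes the limit a fixed, explicitly constructible real, namely $2$; for a mere injection the limit would be $\sum_{n\in\mathrm{ran}(Y)}2^{-n}$, whose construction requires the range of $Y$ and hence strength beyond $\ACAo+\IND$. This is why $\SUM$ in its Cauchy form needed $\BOOT$ while the limit form does not, and why the conclusion here drops from $\NIN$ to $\NBI$.
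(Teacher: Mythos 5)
Your proof is correct and follows essentially the same route as the paper: the same family $a_{x}=2^{-Y(x)}$ with the $\IND$-based finite-choice argument for summability in the first half, and a single application of $\QFAC^{0,1}$ to extract finite partial sums approximating the net limit in the second half, with the same "append a missed point and overshoot the supremum" contradiction. Your value $\sum_{x\in[0,1]}a_{x}=2$ is the correct one (the paper writes $1$, an apparent slip), and your explicit remarks on making the choice matrix quantifier-free via $\exists^{2}$ only spell out what the paper leaves implicit.
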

\begin{proof}
For the first part, let $Y:[0,1]\di \N$ be a bijection and consider $a_{x}:= \frac{1}{2^{Y(x)}}$.  Using $\IND$ as in the proof of Theorem \ref{franoc}, one proves that $(a_{x})_{x\in [0,1]}$ is summable and $1=\sum_{x\in [0,1]}a_{x}$.  
Let $(y_{i})_{i\in \N}$ be as in $\SUM'$ and find $z_{0}\in [0,1]$ not in this sequence using \cite{simpson2}*{II.4.9}.  Then $a_{z_{0}}=\frac{1}{2^{Y(z_{0})}}=0$ yields a contradiction, and $\NBI$ follows as required. 

\smallskip

For the second part, apply $\QFAC^{0,1}$ to $(\forall k^{0})(\exists w^{1^{*}})(|b-\sum_{x\in w}a_{x}|<\frac{1}{2^{k}})$, where $b$ is the (net) limit $\sum_{x\in [0,1]}a_{x}$.
The resulting sequence $(w_{n})_{n\in \N}$ includes all the $x\in [0,1]$ such that $a_{x}\ne 0$, by the definition of net limit.  Indeed, suppose $z\in [0,1]$ is not in this sequence and $a_{z}>\frac{1}{2^{k_{0}}}>0$.
By the definition of net limit, $b$ is the supremum of $\sum_{x\in w}a_{x}$ for all $w^{1^{*}}$, but also $b<\sum_{x\in u}a_{x}$ where $u:=w_{k_{0}+1}*\langle z\rangle$, a contradiction, and $\SUM'$ follows. 
\end{proof}
Let $\cocode_{1}$ be $\cocode_{0}$ restricted to \emph{strongly countable} sets as introduced right after Definition \ref{standard}.
Note that $\QFAC^{0,1}\di \cocode_{1}$ as shown in \cites{dagsamX, dagsamIX}.
\begin{cor}
The system $\ACAo+\IND$ proves $\SUM'\di \cocode_{1}$.  
\end{cor}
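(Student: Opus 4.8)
The plan is to mirror the proof of the corollary $\SUM\di\cocode_{0}$, replacing the Cauchy-based input $\SUM$ by the summability-based input $\SUM'$ and upgrading `countable' to `strongly countable' at exactly the point where the net limit must be pinned down. Since we work in $\ACAo$, we have $(\exists^{2})$ available throughout, so the net underlying $\SUM'$ is well-defined and the trimming step below can be carried out. Fix a strongly countable set $A\subseteq[0,1]$, witnessed by $Y\colon\R\di\N$ that is injective on $A$ and whose restriction to $A$ is moreover surjective onto $\N$. As before, define $a_{x}:=\frac{1}{2^{Y(x)}}$ for $x\in A$ and $a_{x}:=0$ otherwise; this is non-negative.

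First I would establish that $(a_{x})_{x\in[0,1]}$ is summable in the sense of Definition \ref{kaukie2}, which is the crux of the argument and the only place the full strength of strong countability enters. Here I reuse the $\IND$-driven finite-choice machinery from the proof of Theorem \ref{franoc} (also used in the first part of Theorem \ref{quil}): for each $n\in\N$, surjectivity of $Y$ on $A$ together with $\IND$ yields a finite set $I_{n}\subseteq A$ containing, for every $m\leq n$, a (necessarily unique) $x\in A$ with $Y(x)=m$. Injectivity of $Y$ on $A$ then gives, for any $F\in\fin(\R)$ with $F\supseteq I_{n}$, the two-sided estimate $\sum_{m=0}^{n}\frac{1}{2^{m}}\leq \sum_{x\in F}a_{x}\leq \sum_{m\in\N}\frac{1}{2^{m}}=:b$, whence $|\sum_{x\in F}a_{x}-b|\leq\frac{1}{2^{n}}$. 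This shows $\lim_{F}\sum_{x\in F}a_{x}=b$, i.e.\ $(a_{x})_{x\in[0,1]}$ is summable, exactly as the analogous step pins down the net limit in Theorem \ref{quil}.

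Next I would apply $\SUM'$ to obtain a sequence $(y_{i})_{i\in\N}$ in $[0,1]$ with $a_{y}=_{\R}0$ whenever $(\forall i\in\N)(y\ne_{\R}y_{i})$. Using $(\exists^{2})$, I trim $(y_{i})_{i\in\N}$ exactly as in the corollary $\SUM\di\cocode_{0}$ so that $a_{y_{i}}\ne_{\R}0$, and hence $y_{i}\in A$, for every $i$. The definition of $a_{x}$ then yields, for all $x\in[0,1]$,
\[
x\in A\asa a_{x}\ne_{\R}0\asa(\exists i\in\N)(x=_{\R}y_{i}),
\]
so $(y_{i})_{i\in\N}$ is a sequence in $A$ enumerating precisely $A$, which is exactly $\cocode_{1}$ for the strongly countable set $A$.

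The main obstacle is the summability step: unlike the Cauchy criterion used for $\SUM$, verifying that the net limit \emph{exists} forces one both to identify the limit $b$ and to realise every value $m\leq n$ by some element of $A$. This is precisely what surjectivity of $Y$ on $A$ supplies, and it is the reason the argument yields only the restricted principle $\cocode_{1}$ rather than $\cocode_{0}$; the finite-choice consequence of $\IND$ established in Theorem \ref{franoc} (via \eqref{hug}) is the technical engine producing the finite sets $I_{n}$.
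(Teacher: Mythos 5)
Your proof is correct and takes essentially the approach the paper intends for this corollary (which it leaves implicit): you combine the summability argument from the first part of Theorem \ref{quil} — where surjectivity of $Y$ on $A$ plus the $\IND$-derived finite choice pins down the net limit — with the trimming-and-enumeration step from the corollary $\SUM\di\cocode_{0}$. Your closing observation that strong countability is needed precisely to establish summability (and hence that only $\cocode_{1}$ is obtained) matches the paper's discussion of why the shift from the Cauchy criterion to summability weakens the principle.
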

In conclusion, Theorems \ref{franoc} and  \ref{quil} are interesting as they show that the slight shift from the Cauchy condition in Definition \ref{kaukie} to summability as in Definition~\ref{kaukie2} yields a theorem that is much weaker.  Indeed $\Z_{2}^{\omega}+\QFAC^{0,1}$ cannot prove $\NIN$, while $\RCAo+\QFAC^{0,1}$ does prove $\NBI$, as shown in \cite{dagsamX}*{\S3}.
Moreover, $\SUM$ can be formulated without mentioning net limits, making it `more finitary'. 

\smallskip

Finally, we note that our results on nets from \cites{samnetspilot, samcie19, samwollic19} were the inspiration 
for some of the results in Section \ref{donk} on metric spaces, as follows. 


\subsection{Metric spaces}\label{donk}
\subsubsection{Introduction}\label{blintro}
In this section, we derive $\NIN$, $\NBI$, and $\cocode_{0}$ from various basic theorems pertaining to metric spaces, namely the following.
\begin{itemize}
\item The local equivalence between sequential and epsilon-delta continuity. 
\item The separability of certain metric spaces $(M, d)$ for $M\subseteq [0,1]$.
\end{itemize}
We first introduce some necessary definitions.  We emphasise that we only study metric spaces $(M, d)$ where $M$ is a subset of $\N^{\N}$ (modulo possible coding).  

\smallskip

We study metric spaces $(M, d)$ as in Definition~\ref{donc}, where $M$ comes with its own equivalence relation `$=_{M}$' and the metric $d$ satisfies 
the axiom of extensionality on $M$, i.e.\ $(\forall x, y, v, w\in M)\big([x=_{M}y\wedge v=_{M}w]\di d(x, v)=_{\R}d(y, w)\big)$.  
\bdefi\label{donc}
A functional $d: M^{2}\di \R$ is a \emph{metric on $M$} if it satisfies the following properties for $x, y, z\in M$:
\begin{enumerate}
 \renewcommand{\theenumi}{\alph{enumi}}
\item $d(x, y)=_{\R}0 \asa  x=_{M}y$,
\item $0\leq_{\R} d(x, y)=_{\R}d(y, x), $
\item $d(x, y)\leq_{\R} d(x, z)+ d(z, y)$.
\end{enumerate}
We use standard notation like $B_{d}^{M}(x, r)$ to denote $\{y\in M: d(x, y)<r\}$.
\edefi
To be absolutely clear, quantifying over $M$ amounts to quantifying over $\N^{\N}$ or $\R$, perhaps modulo coding, i.e.\ the previous definition can be made in third-order arithmetic for the intents and purposes of this paper.  

\smallskip

The following definitions are then standard.  
\bdefi[Countably-compact]
A metric space $(M, d)$ is \emph{countably-compact} if for any $(a_{n})_{n\in \N}$ in $M$ and sequence of rationals $(r_{n})_{n\in \N}$ such that we have $M\subset \cup_{n\in \N}B^{M}_{d}(a_{n}, r_{n})$, there is $m\in \N$ such that  $M\subset \cup_{n\leq m}B^{M}_{d}(a_{n}, r_{n})$.
\edefi
\bdefi[Separability]\label{SEPKE} 
A metric space $(M, d)$ is \emph{separable} if there is a sequence $(x_{n})_{n\in \N}$ in $M$ such that $(\forall x\in M, k\in \N)(\exists n\in \N)( d(x, x_{n})<\frac{1}{2^{k}})$.
\edefi
We note that Definition \ref{SEPKE} is used in constructive mathematics (see \cite{troeleke2}*{Ch.\ 7, Def.\ 2.2}).  
Our notion of separability is also implied by \emph{total boundedness} as used in RM (see \cite{simpson2}*{III.2.3} or \cite{browner}*{p.\ 53}).
According to Simpson (\cite{simpson2}*{p.\ 14}), one cannot speak at all about non-separable spaces in $\L_{2}$.

\subsubsection{Sequential continuity}
We show that $\NBI$ (and not $\NIN$) follows from the equivalence between sequential and `epsilon-delta' continuity in metric spaces. 

\smallskip

As to background,  as shown in \cite{kohlenbach2}*{Prop.\ 3.6}, $\RCAo+\QFAC^{0,1}$ is strong enough to show the \emph{local/pointwise} equivalence between sequential and epsilon-delta continuity on Baire space (or $\R$), while $\ZF$ cannot prove this equivalence, as noted in \cite{kohlenbach2}*{Remark 3.13}.  By \cite{heerlijk}*{Theorem 4.54}, working over $\ZF$, the axiom of countable choice for $\R$ is equivalent to the aforementioned local/pointwise equivalence for $\R$; the latter is therefore much weaker than the former over $\RCAo$, as $\NIN$ readily follows from countable choice for $\R$, but not from $\QFAC^{0,1}$ by \cite{dagsamX}*{Theorem 3.1}.

\smallskip

Since $\QFAC^{0,1}$ readily implies $\NBI$, it is a natural question whether a version of the aforementioned local/pointwise equivalence implies $\NBI$.  
We provide a positive answer, as follows.  Recall that the assumption on $M$ from Section \ref{blintro}.  

\begin{thm}\label{klon}
The system $\RCAo$ proves that $\NBI$ follows from 
\begin{center}
\emph{For a metric space $(M, d)$, for any $x\in M$ and $F:M\di M$, if $F$ is sequentially continuous at $x$, then $F$ is continuous at $x$.}
\end{center}
The system $\ACAo+\QFAC^{0,1}$ proves the centred theorem. 
\end{thm}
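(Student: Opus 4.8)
The plan is to treat the two parts separately; the content is in the first implication, $\RCAo \vdash (\star)\di\NBI$, where $(\star)$ denotes the centred statement. Assume for contradiction that $Y:[0,1]\di\N$ is a bijection. The obvious counterexample to ``sequential continuity implies continuity'' would enumerate $[0,1]$ as $Y^{-1}(0), Y^{-1}(1),\dots$ and let these approach a limit point; but $\RCAo$ cannot form the inverse enumeration of $Y$ (inverting $Y$ is a search over the reals), and this is exactly the obstacle. I circumvent it by encoding only the \emph{finite partial inverses}. Let $M$ be the set of finite sequences $\sigma$ of reals with $(\forall i<|\sigma|)(Y(\sigma(i))=_{0}i)$, together with one adjoined point $*$. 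Membership in $M$ is quantifier-free in the parameter $Y$ (a bounded conjunction of instances of $Y(\sigma(i))=_{0}i$), so $M$ exists in $\RCAo$; moreover injectivity of $Y$ forces any two elements of $M$ of equal length to be equal, and surjectivity provides, for each $n$, a (necessarily repetition-free) element of length $n$. Define a metric by pulling back $\sigma\mapsto \frac{1}{|\sigma|+1}$ and $*\mapsto 0$ along the absolute value on $\R$; definiteness holds precisely because equal lengths force equality in $M$. Thus $(M,d)$ is a legitimate metric space (a subset of $\N^{\N}$ modulo coding) in which $*$ is a limit point whose unique approximant of length $n$ sits at distance $\frac{1}{n+1}$.

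Next I record the two facts that drive the argument. First, $F:M\di M$ defined by $F(*)=*$, $F(\langle\rangle)=\langle\rangle$, and $F(\sigma)=\langle\sigma(0)\rangle$ for $|\sigma|\geq 1$ is definable in $\RCAo$, maps into $M$, and is \emph{discontinuous} at $*$: by surjectivity there are $\sigma\in M$ of arbitrarily large length arbitrarily close to $*$, while $F(\sigma)$ is the fixed length-one prefix at distance $\tfrac12$ from $F(*)=*$. Second, the design of $M$ means that \emph{topological} convergence to $*$ forces lengths to infinity. Now apply $(\star)$ contrapositively: since $F$ is discontinuous at $*$, it is not sequentially continuous at $*$, so there is a sequence $(\tau_{k})$ in $M$ with $\tau_{k}\di *$ and hence $|\tau_{k}|\di\infty$. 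This is the convergent sequence $\RCAo$ could not build by hand; $(\star)$ has manufactured it. Because each $\tau_{k}\in M$ is a correct partial inverse, setting $x_{m}:=\tau_{k(m)}(m)$ for the least $k(m)$ with $|\tau_{k(m)}|>m$ yields, in $\RCAo$, a total sequence with $Y(x_{m})=_{0}m$ for all $m$. Finally, \cite{simpson2}*{II.4.9} produces $z\in[0,1]$ with $z\ne_{\R}x_{m}$ for all $m$; then $Y(z)=m_{0}$ for some $m_{0}$, whence $Y(x_{m_{0}})=m_{0}=Y(z)$ and injectivity gives $x_{m_{0}}=_{\R}z$, a contradiction. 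Hence $\NBI$. I expect the main obstacle to be exactly this encoding step: arranging that the sequence extracted from $(\star)$ is forced to reconstruct a \emph{total} inverse of $Y$ (not merely a cofinal fragment), which is why the carrier is the space of correct prefixes ordered by length rather than a space of individual points.

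For the second part, $\ACAo+\QFAC^{0,1}\vdash(\star)$, I argue by the standard contrapositive and use that $(\exists^{2})$ is available from $\ACAo$. Suppose $F:M\di M$ is not continuous at $x\in M$; unwinding the definition yields some $\eps>_{\R}0$ with $(\forall n)(\exists y\in M)\big(d(x,y)<_{\R}2^{-n}\wedge d(F(x),F(y))\geq_{\R}\eps\big)$. The matrix here is arithmetical but not literally quantifier-free; however, $(\exists^{2})$ supplies characteristic functionals for the real comparisons $<_{\R},\geq_{\R}$ (and membership in $M$ is already decidable), so the matrix is equivalent, with $(\exists^{2})$ as a parameter, to a quantifier-free formula. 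Applying $\QFAC^{0,1}$ then produces a sequence $(y_{n})_{n\in\N}$ in $M$ with $d(x,y_{n})<_{\R}2^{-n}$ and $d(F(x),F(y_{n}))\geq_{\R}\eps$ for all $n$. Thus $y_{n}\di x$ while $F(y_{n})\not\di F(x)$, so $F$ is not sequentially continuous at $x$. Contraposing gives $(\star)$. Here the only subtleties are the two cited roles---$(\exists^{2})$ to render the arithmetical matrix quantifier-free and $\QFAC^{0,1}$ to assemble the witnessing sequence---mirroring exactly the choice principle that the first part shows to be unavailable in $\RCAo$.
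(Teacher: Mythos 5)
Your proposal is correct and matches the paper's proof in all essentials: the same carrier consisting of the correct finite partial inverses of $Y$ together with one adjoined point at distance determined by the length, the same collapsing map $F$ that is discontinuous there precisely because $Y$ is surjective, Cantor's theorem (\cite{simpson2}*{II.4.9}) to obtain the final contradiction, and the same $(\exists^{2})$-plus-$\QFAC^{0,1}$ contrapositive argument for the second part. The only cosmetic difference is one of arrangement: the paper shows $F$ is (vacuously) sequentially continuous yet discontinuous, directly refuting the centred statement, whereas you apply the centred statement contrapositively to manufacture the convergent sequence and then read off a total enumeration of $[0,1]$ contradicting Cantor's theorem --- the same mathematical content run in the opposite direction.
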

\begin{proof}
As in the previous proofs, we may assume $(\exists^{2})$. 
For the first part, suppose $Y:[0,1]\di \N$ is a bijection. Define $M$ as the union of $\{0_{M}\}$ and the set $N:=\{ w^{1^{*}}: (\forall i<|w|)(Y(w(i))=i)\}$. 
This definition makes sense because $Y$ is a bijection. 
We define `$=_{M}$' as $0_{M}=_{M}0_{M}$, $u\ne_{M} 0_{M}$ for $u\in N$, and $w=_{M}v$ if $w=_{1^{*}}v$ and $w, v\in N$.
The metric $d:M^{2}\di \R$ is defined as $d(0_{M}, 0_{M})=_{\R}0$, $d(0_{M}, u)=d(u, 0_{M})=\frac{1}{2^{|u|}}$ for $u\in N$ and $d(w, v)=|\frac{1}{2^{|v|}}-\frac{1}{2^{|w|}}|$ for $w, v\in N$.
Since $Y$ is an injection, we have $d(v, w)=_{\R}0 \asa  v=_{M}w$.  The other properties of a metric space from Definition \ref{donc} follow by definition.  

\smallskip
  
Now define the function $F: M\di M$ as follows: $F(0_{D}):= 0_{D}$ and $F(w):=u_{0}$ for any $w\in N$ and some fixed $u_{0}\in N$.   
Clearly, if the sequence $(w_{n})_{n\in \N}$ converges to $0_{D}$, either it is eventually constant $0_{D}$ or lists all reals in $[0,1]$.  The latter case is impossible by Cantor's theorem (\cite{simpson2}*{II.4.9}).  
Hence, $F$ is sequentially continuous at $0_{D}$, but clearly not continuous at $0_{D}$.  This contradiction yields $\NBI$.

\smallskip

For the second part, suppose $F:M\di M$ is not continuous at $y_{0}\in M$, i.e.\
\[\textstyle
(\exists k_{0}\in \N)(\forall N\in \N)(\exists y\in M)(d(y_{0},y)<\frac{1}{2^{N}}\wedge d(F(y), F(y_{0}))>\frac{1}{2^{k_{0}}}).
\]
Fix such $k_{0}$ and apply $\QFAC^{0,1}$ (using $\exists^{2}$) to obtain a sequence $(x_{n})_{n\in \N}$ in $M$ such that $y_{0}=_{M}\lim_{n\di \infty}x_{n}$ but $F(y_{0})\ne_{M} \lim_{n\di \infty}F(x_{n})$.
\end{proof}
A function $F$ is \emph{net continuous} at $x$ if for any net $(x_{d})_{d\in D}$ converging to $x$, the net $(F(x_{d}))_{d\in D}$ converges to $F(x)$.
The RM-study of this notion is in \cite{samnetspilot}*{\S4}.
\begin{cor}
The first part of the theorem remains valid if we replace `$F$ is continuous at $x$' by `$F$ is net continuous at $x$'.
\end{cor}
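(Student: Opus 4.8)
The plan is to reuse verbatim the metric space $(M,d)$, the distinguished point $0_{M}$, the fixed $u_{0}\in N$, and the function $F:M\di M$ built in the proof of Theorem \ref{klon} from a putative bijection $Y:[0,1]\di \N$; recall $N=\{w^{1^{*}}:(\forall i<|w|)(Y(w(i))=i)\}$, that $F(0_{M})=_{M}0_{M}$, and that $F(w)=_{M}u_{0}$ for every $w\in N$. That proof already shows, via Cantor's theorem \cite{simpson2}*{II.4.9}, that $F$ is sequentially continuous at $0_{M}$: a sequence converging to $0_{M}$ but not eventually equal to it would enumerate $[0,1]$. Hence the hypothesis of the corollary applies and delivers that $F$ is \emph{net} continuous at $0_{M}$. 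The entire argument then reduces to producing a single net that converges to $0_{M}$ while its $F$-image does not, contradicting net continuity and forcing $\NBI$.

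For that net I would take the index set $N$ itself, directed by length, together with the inclusion $u\mapsto x_{u}:=u$. Every $x_{u}$ lies in $N$, so $F(x_{u})=_{M}u_{0}$ for all $u$, and the image net is constant $u_{0}$; since $d(u_{0},0_{M})=\tfrac{1}{2^{|u_{0}|}}>0$ this image net does not converge to $F(0_{M})=_{M}0_{M}$. It therefore only remains to check that $(x_{u})_{u\in N}$ converges to $0_{M}$. As $d(0_{M},u)=\tfrac{1}{2^{|u|}}$, this is exactly the assertion that $N$ is cofinal in length, i.e.\ $(\forall k)(\exists w\in N)(|w|>k)$, which follows from the surjectivity of $Y$ by the finite-choice lemma isolated in the proof of Theorem \ref{franoc}: applying that lemma to the predicate ``$Y(z)=m$'' assembles, for each $k$, reals $z_{0},\dots,z_{k}$ with $Y(z_{i})=i$ into a single $w\in N$ of length $k+1$.

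I expect this last assembly to be the crux, and it is precisely where the proof must leave the base theory of Theorem \ref{klon}. Combining variably many existential witnesses into a single finite sequence is finite choice with a variable bound, which --- as in Theorem \ref{franoc} --- requires the full induction axiom $\IND$ and is not available from the quantifier-free induction of $\RCAo$; I would accordingly prove the corollary in $\RCAo+\IND$, assuming $(\exists^{2})$ throughout by the excluded-middle trick of Section \ref{pintro} (for $\neg(\exists^{2})$ all functions on $\R$ are continuous, whence $\NBI$). The point worth emphasising is that this does \emph{not} resurrect the sequence forbidden by Cantor's theorem: the net $u\mapsto u$ is nothing but the inclusion $N\hookrightarrow M$ and needs no enumeration of $N$, so $N$ functions as an \emph{unenumerable} length-chain of order type $\omega$. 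Establishing mere cofinality of $N$ (a $\forall\exists$ statement) is strictly weaker than producing a choice function listing $N$; the latter would, through $\QFAC^{0,1}$, enumerate all of $[0,1]$ and yield $\NBI$ directly. This is exactly why replacing sequential by net continuity keeps the hypothesis strong enough to do the work.
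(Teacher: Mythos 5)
Your construction is exactly the paper's: the same $(M,d)$ and $F$ from Theorem \ref{klon}, with the net indexed by $N$ itself ordered by inclusion (equivalently, by length, since $Y$ is injective) and $x_{u}:=u$, whose image net is constantly $u_{0}$ while the net itself converges to $0_{M}$. The only divergence is that the paper declares this convergence ``clear'' and keeps the base theory at $\RCAo$, whereas you correctly isolate that cofinality of $N$ in length amounts to finite choice over surjectivity of $Y$ and hence flag $\IND$ --- a legitimate refinement of the same argument rather than a different one.
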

\begin{proof}
Consider $M$ and $F$ from the proof of the theorem.
Define a net $(x_{d})_{d\in D}$ by $D=N$ and $x_{d}:=d$, and order this set by inclusion.  
Clearly, this net converges to $0_{D}$, while $F(0_{D})$ is different from the limit of the net $(F(x_{d}))_{d\in D}$.
\end{proof}
On a conceptual note, it is well-known that topologies cannot always be described in terms of sequences, but nets are needed instead.  
The previous proof provides a nice example of a space $M$ (which exists in $\Z_{2}^{\omega}+\neg\NBI$) in which no non-trivial \emph{sequence} converges to $0_{D}$, while 
there is a non-trivial net that converges to $0_{D}$.  

\smallskip

Inspired by the previous, recall that a space is called \emph{sequential} if the usual definition of closed set (complement of an open set) coincides with the sequential definition (closed under limits of sequences). 
Note that for $M$ as in the proof of the theorem, the set $N$ is sequentially closed, since a sequence that converges to $0_{D}$ must be eventually constant $0_{D}$.  
Clearly, $N$ is not closed as $\{0_{D}\}$ is not open.  Hence, $\NBI$ follows from the statement that \emph{any metric space is sequential}. 

\subsubsection{Separability of metric space}
In this section, we show that $\cocode_{0}$ is implied by a basic `separability' theorem called $\STS^{+}$, which does not mention the notion of `countable set' as in Definition \ref{standard}.  
The results are based on \cite{dagsamX}*{\S3.1.3}, where $\NIN$ is derived from $\STS^{+}$ restricted to $A=[0,1]$. 
%

\smallskip

First of all, as to motivation, the study of metric spaces in RM proceeds -unsurprisingly- via codes, namely a complete separable metric space is represented via a countable and dense subset (\cite{simpson2}*{II.5.1}). 
It is then a natural question how hard it is to prove that this countable and dense subset exists for the original/non-coded metric spaces. 
We study the special case for metrics \emph{defined on sub-sets of the unit interval}, as in Definition \ref{donc} and $\STS^{+}$ below, which implies $\cocode_{0}$ by Theorem \ref{STS}.  
Our interest in $\STS^{+}$ lies with Corollary \ref{BOOM}.

\begin{princ}[$\STS^{+}$]
For any $M\subseteq [0,1]$, if $(M, d)$ is 
a countably-compact metric space, then it is separable. 
\end{princ}
\begin{thm}\label{STS}
The system $\RCAo$ proves $\STS^{+}\di \cocode_{0}$.
\end{thm}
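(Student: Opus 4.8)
The plan is to realise any countable $A\subseteq[0,1]$ as a countably-compact metric space whose separating sequence is forced to enumerate $A$, so that $\STS^{+}$ delivers $\cocode_{0}$. As in Section~\ref{pintro} I would first invoke the excluded-middle trick: under $\neg(\exists^{2})$ every function on $\R$ is continuous, hence every injection $Y\colon\R\di\N$ is constant (as $\R$ is connected), so a non-empty countable $A$ is a singleton and $\cocode_{0}$ holds outright. It therefore suffices to prove $\STS^{+}\di\cocode_{0}$ in $\ACAo$, i.e.\ assuming $(\exists^{2})$.

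Fix a non-empty countable $A\subseteq[0,1]$ with $Y\colon\R\di\N$ injective on $A$, fix $a_{*}\in A$, and put $p_{0}:=0$. Using $(\exists^{2})$, define $\iota\colon[0,1]\di\R$ by $\iota(x):=0$ if $x=_{\R}p_{0}$ and $\iota(x):=\frac{1}{2^{Y(x)+1}}$ otherwise, set $M:=A\cup\{p_{0}\}$ and $d(x,y):=|\iota(x)-\iota(y)|$, and declare $x=_{M}y$ to mean $\iota(x)=_{\R}\iota(y)$. Since $Y$ is injective on $A$ and $\iota$ is positive off $p_{0}$, one checks that $=_{M}$ coincides with $=_{\R}$ on $M$ and that $d$ is an extensional metric in the sense of Definition~\ref{donc}. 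The crucial geometric features are that $p_{0}$ is the unique point with image $0$, while every $x\in A$ with $x\ne_{\R}p_{0}$ is \emph{isolated}: the nearest possible images to $\iota(x)=\frac{1}{2^{Y(x)+1}}$ are $\frac{1}{2^{Y(x)}}$ and $\frac{1}{2^{Y(x)+2}}$, so no point of $M$ other than $x$ lies within $d$-distance $\frac{1}{2^{Y(x)+2}}$ of $x$.

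The main work, and the step I expect to be the real obstacle, is verifying that $(M,d)$ is countably-compact using only $(\exists^{2})$. Given $(a_{n})_{n\in\N}$ in $M$ and rationals $(r_{n})_{n\in\N}$ covering $M$, the point $p_{0}$ lies in some ball, so $\mu$ produces the least $n_{0}$ with $\iota(a_{n_{0}})<r_{n_{0}}$. Because $\iota(a_{n_{0}})-r_{n_{0}}<0$, the ball $B_{d}^{M}(a_{n_{0}},r_{n_{0}})$ contains every $y\in M$ with $\iota(y)<c:=\iota(a_{n_{0}})+r_{n_{0}}$; the uncovered points thus satisfy $\iota(y)\ge c>0$, whence $Y(y)\le N$ for the computable bound $N$ determined by $2^{-(N+1)}\ge c$. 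By injectivity there are at most $N+1$ relevant image-values $v_{j}:=\frac{1}{2^{j+1}}$ $(j\le N)$; crucially, a point of image $v_{j}$ lies in $B_{d}^{M}(a_{n},r_{n})$ iff $|v_{j}-\iota(a_{n})|<r_{n}$, a condition on $n$ and $j$ \emph{alone}, so I can let $\mu$ return the least such index $n_{j}'$ (taking $n_{j}':=0$ if none exists). Then $\{B_{d}^{M}(a_{n},r_{n}):n\le m\}$ with $m:=\max(n_{0},n_{0}',\dots,n_{N}')$ is the desired finite subcover. This reduction to conditions on indices and image-values is precisely what keeps the argument inside $\ACAo$, avoiding any (undecidable) quantification over the points of $M$.

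Finally, $\STS^{+}$ yields a $d$-dense sequence $(z_{n})_{n\in\N}$ in $M$. For each $x\in A$ with $x\ne_{\R}p_{0}$, choosing $k>Y(x)+2$ and applying density at $x$ gives some $z_{n}$ with $d(x,z_{n})<\frac{1}{2^{k}}$; by the isolation estimate this forces $z_{n}=_{M}x$, hence $z_{n}=_{\R}x$ and in particular $z_{n}\in A$. Thus every element of $A$ distinct from $p_{0}$ occurs among the $z_{n}$. Using $(\exists^{2})$ to decide membership in $A$, I set $x_{n}:=z_{n}$ when $z_{n}\in A$ and $x_{n}:=a_{*}$ otherwise, and interleave the constant $p_{0}$ (replaced by $a_{*}$ if $p_{0}\notin A$); this produces a sequence in $A$ whose range is exactly $A$, i.e.\ $(\forall x\in\R)(x\in A\asa(\exists n)(x_{n}=_{\R}x))$. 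That is $\cocode_{0}$, which completes the plan.
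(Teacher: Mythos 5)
Your proposal is correct and follows essentially the same route as the paper's proof: the same excluded-middle reduction to $(\exists^{2})$, the same metric $d(x,y)=|2^{-Y(x)}-2^{-Y(y)}|$ (up to a harmless exponent shift) making $0$ the unique accumulation point and every other point of $A$ isolated, the same one-ball-covers-all-but-finitely-many argument for countable compactness, and the same use of the isolation radius to force the dense sequence to enumerate $A$. Your write-up is merely more explicit in two places the paper leaves terse, namely the $\mu$-minimization producing the finite subcover and the final massaging of the dense sequence into an enumeration of $A$ itself.
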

\begin{proof}
Recall that by \cite{kohlenbach2}*{\S3}, $\cocode_{0}$ trivially holds if $\neg(\exists^{2})$ as in the latter case all functions on $\R$ are continuous.  
Thus, we may assume $(\exists^{2})$ for the rest of the proof.  Let $A\subset [0,1]$ be a countable set and let $Y:[0,1]\di \N$ be injective on $A$.
Without loss of generality, we may assume that $0\in A$.

\smallskip

Define $d(x, y):=|\frac{1}{2^{Y(x)}}-\frac{1}{2^{Y(y)}}|$ in case $x, y\in A$ are non-zero.  Define $d(0, 0):=0$ and $d(x, 0)= d(0, x):= \frac{1}{2^{Y(x)}}$ for non-zero $x\in A$.
The first item in Definition~\ref{donc} holds by the assumption on $Y$, while the other two items hold by definition. 

\smallskip

The metric space $(A, d)$ is countably-compact as $0\in B_{d}(x, r)$ implies $y\in B_{d}(x, r)$ for $y\in A$ with only finitely many exceptions (due to $Y$ being an injection).  
Let $(x_{n})_{n\in\N}$ be the sequence provided by $\STS^{+}$, implying $(\forall x\in A)(\exists n\in \N)( d(x, x_{n})<\frac{1}{2^{Y(x)+1}})$ by taking $k=Y(x)+1$.  
The latter formula implies 
\be\label{jilogz}\textstyle
(\forall x\in A)(\exists n\in \N)(x\ne_{\R} 0\di  |\frac{1}{2^{Y(x)}}-\frac{1}{2^{Y(x_{n})}}|<_{\R}\frac{1}{2^{Y(x)+1}}) 
\ee
by definition.  Note that $x_{n}$ from \eqref{jilogz} cannot be $0$ by the definition of the metric $d$.  
Clearly, $|\frac{1}{2^{Y(x)}}-\frac{1}{2^{Y(x_{n})}}|<\frac{1}{2^{Y(x)+1}}$ is only possible if $Y(x)=Y(x_{n})$, implying $x=_{\R}x_{n}$. 
Hence, we have shown that $(x_{n})_{n\in \N}$ lists all reals in $A\setminus\{0\}$.  
\end{proof}
By the previous proof, we may restrict $\STS^{+}$ to countable metric spaces, whence it becomes an extension of \cite{hirstrm2001}*{Theorem 1.item 2}.
The following corollary follows in the same way as \cite{dagsamX}*{Theorem 3.23}. 
\begin{cor}\label{BOOM}
The system $\FIVE^{\omega}+\STS^{+}$ proves $\SIX$.
\end{cor}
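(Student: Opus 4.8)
The plan is to obtain $\SIX$ by chaining $\STS^{+}$ to $\cocode_{0}$ and then exploiting the known \emph{explosive} behaviour of $\cocode_{0}$ in the presence of the Suslin functional, rather than by any direct model-theoretic construction. First I would invoke Theorem~\ref{STS}, which already shows $\RCAo\vdash \STS^{+}\di \cocode_{0}$; since $\FIVE^{\omega}$ extends $\RCAo$ (indeed $\SS^{2}$ decides $\Sigma_{1}^{1}$-formulas and hence $\Sigma_{1}^{0}$-formulas, so that $\FIVE^{\omega}$ proves $(\exists^{2})$ and sits above $\ACAo$), the system $\FIVE^{\omega}+\STS^{+}$ proves $\cocode_{0}$ outright. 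This is the only step where the separability hypothesis is used, and it is used exactly as in the proof of Theorem~\ref{STS}.

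Next I would pass from $\cocode_{0}$ to the Bolzano--Weierstrass theorem for countable sets in $2^{\N}$: by \cite{samNEO}*{\S3} these are equivalent over $\RCAo$, so $\FIVE^{\omega}+\STS^{+}$ proves the latter as well. Finally, and this is the substantive ingredient, I would invoke that this Bolzano--Weierstrass principle, added to $\FIVE^{\omega}$, already yields $\SIX$; this is precisely the implication established in \cite{dagsamX}*{\S3} and is what the phrase ``follows in the same way as \cite{dagsamX}*{Theorem~3.23}'' refers to. Combining the three steps gives $\FIVE^{\omega}+\STS^{+}\vdash \SIX$.

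The hard part will be the last step, which I would import rather than reprove. Its content is that a principle which is \emph{non-normal} and weak in isolation (it does not imply $(\exists^{2})$, nor does it reach beyond $\ACAo$ on its own) nevertheless \emph{explodes} when joined to the Suslin functional, leaping from the $\FIVE$-level to the considerably stronger $\SIX$. Heuristically, $\SS^{2}$ supplies $\Pi_{1}^{1}$-comprehension, while the enumeration delivered by $\cocode_{0}$ (equivalently, the Bolzano--Weierstrass principle) lets one run $\SS^{2}$ along a countable family of $\Pi_{1}^{1}$-defined reals and so decide the leading second-order quantifier of a $\Sigma_{2}^{1}$-formula, which is what a $\Pi_{2}^{1}$-comprehension step requires. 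The first two steps are routine bookkeeping, so the result is genuinely a corollary of Theorem~\ref{STS} together with the machinery of \cite{dagsamX,samNEO}.
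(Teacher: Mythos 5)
Your proposal is correct and is essentially the route the paper intends: the paper offers no argument beyond pointing to \cite{dagsamX}*{Theorem 3.23}, and the chain it has in mind is precisely yours, namely $\STS^{+}\di \cocode_{0}$ via Theorem \ref{STS} followed by the known explosion of $\cocode_{0}$ (equivalently, the Bolzano--Weierstrass theorem for countable sets in $2^{\N}$, per \cite{samNEO}*{\S3}) when combined with $\SS^{2}$, as recorded in \cite{dagsamX}*{\S3}. You correctly isolate the one substantive step as imported rather than reproved, which is exactly how the paper treats it.
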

Finally, if we replace `separability' in $\STS^{+}$ by e.g.\ covering properties (Heine-Bore, Vitali, Lindel\"of), the resulting principle
does imply $\NIN$, namely by \cite{dagsamX}*{Cor.~3.11}, but no longer $\cocode_{0}$, it seems.

\begin{ack}\rm
We thank Anil Nerode for his helpful suggestions and Jeff Hirst and Carl Mummert for suggesting the principle $\NBI$ to us. 
Our research was supported by the John Templeton Foundation via the grant \emph{a new dawn of intuitionism} with ID 60842 and by the \emph{Deutsche Forschungsgemeinschaft} via the DFG grant SA3418/1-1.
Opinions expressed in this paper do not necessarily reflect those of the John Templeton Foundation.   
\end{ack}

%
%
%
%

\begin{bibdiv}
\begin{biblist}
\bibselect{allkeida}
\end{biblist}
\end{bibdiv}

\bye